\documentclass[11pt]{amsart}
\usepackage[margin=1in]{geometry}

\usepackage{todonotes, xspace, tikz, bbm, physics, mathrsfs, shuffle, csquotes, graphicx, mathtools}
\usepackage[colorlinks=true,citecolor=blue, linkcolor=red]{hyperref}


\DeclareMathOperator{\ept}{ept}


\newcommand{\bE}{\mathbb E}

\newcommand{\eps}{\varepsilon}

\newcommand{\bigp}[1]{\left( #1 \right)} 
\newcommand{\bigb}[1]{\left[ #1 \right]} 

\newcommand{\Var}[1]{\mathrm{Var}\left[ #1 \right]}



\newtheorem{counter}{Theorem}[section]
\newtheorem{theorem}[counter]{Theorem}

\newtheorem{lemma}[counter]{Lemma}

\newtheorem{remark}[counter]{Remark}

\newcommand{\floor}[1]{\left\lfloor #1 \right\rfloor}


\title{Probabilistic zero forcing on grid, regular, and hypercube graphs}
\author{David Hu and Alec Sun}

\begin{document}

\maketitle

\begin{abstract}
Probabilistic zero-forcing is a coloring process on a graph. In this process, an initial set of vertices is colored blue, and the remaining vertices are colored white. At each time step, blue vertices have a non-zero probability of forcing white neighbors to blue. The expected propagation time is the expected amount of time needed for every vertex to be colored blue. We derive asymptotic bounds for the expected propagation time of several families of graphs. We prove the optimal asymptotic bound of $\Theta(m+n)$ for $m\times n$ grid graphs. We prove an upper bound of $O\bigp{\frac{\log d}{d} \cdot n}$ for $d$-regular graphs on $n$ vertices and provide a graph construction that exhibits a lower bound of $\Omega\bigp{\frac{\log \log d}{d} \cdot n}$. Finally, we prove an asymptotic upper bound of $O(n \log n)$ for hypercube graphs on $2^n$ vertices.
\end{abstract}

\section{Introduction}

Zero-forcing is a widely studied coloring process on a graph. Initially, some vertices in a graph $G$ are colored blue, while other vertices are white. At each time step, each blue vertex $u$ connected to a white vertex $v$ changes the color of $v$ to blue if $v$ is the only white neighbor of $u$. When this happens, we say that $u\ \textit{forces}\ v$. At a given time step, a white vertex $v$ may be forced by one or more blue vertices, in which case it will become blue. Every vertex that is colored blue will always remain blue, and it may force other white vertices to blue in future time steps. The concept of zero-forcing has been used to attack the maximum nullity problem of combinatorial matrix theory in \cite{1}, \cite{2}, \cite{7}, and \cite{12}. Zero forcing is also related to power domination \cite{3} and graph searching \cite{15}. Viewing zero forcing through the lens of dynamical processes, Fallat et al. \cite{8} and Hogben et al. \cite{11} have studied the number of steps it takes for an initial vertex set to force all other vertices to blue, assuming that all the vertices will eventually become blue under the zero forcing rule. This is known as the \emph{propagation time} of a set. Zero forcing can potentially model certain real world propagation processes such a rumor spreading. That being said, the deterministic nature of zero-forcing presents an obstacle for simulating the seemingly random process of rumor spreading in the real world.

The probabilistic color change rule is a probabilistic modification of the classic zero-forcing coloring rule introduced by Kang and Yi \cite{13}. For every blue vertex $u$ connected to a white vertex $v$, $u$ forces $v$ with probability 
$$\Pr[u \rightarrow v] = \frac{C[u]}{\deg u},$$
where $C[u]$ denotes the number of blue vertices in the closed neighborhood of $u$ including $u$ itself, and $\deg u$ is the total number of vertices connected to $u$. When some blue vertex has exactly one white neighbor, note that the probabilistic color change rule corresponds to the classical color change rule because that white neighbor is forced blue with probability 1. For a random probabilistic zero-forcing process on a graph $G$ which initially starts with a set of vertices $S$ colored blue, the $\emph{propagation time}$ of $S$, taking values in $\mathbb{N}\cup\{\infty\},$ is defined as the number of time steps until all vertices in $G$ are colored blue. The $\emph{expected propagation time}$ of $S$, denoted by $\ept(G,S)$, is the expected value of the propagation time of $S$. We also define the expected propagation time of a graph $G$, denoted by $\ept(G)$, as the minimum expected propagation time over all single-vertex subsets $S= \{v\}, v\in G$.

As noted in \cite{english}, probabilistic zero-forcing is very similar to the well-studied \emph{push} and \emph{pull} models for rumor spreading from theoretical computer science \cite{5e, 13e}. For the \emph{push} model, one starts with a set of blue vertices, and at each time step, each blue vertex chooses one neighbor independently and uniformly at random and forces that vertex blue, if that vertex is white. For the \emph{pull} model, at each time step each white vertex chooses a neighbor independently and uniformly at random, and the white vertex turns blue if the chosen neighbor is blue. The two models can also be combined to create a \emph{push and pull} model in which at each time step, blue vertices choose a random neighbor to force and white vertices choose a random neighbor to try to become blue. As with probabilistic zero-forcing, the primary parameter of interest is the expected propagation time.

Returning now to probabilistic zero forcing, recent work has been done to compute bounds on propagation time for many families of graphs, including paths, cycles, complete graphs, and bipartite graphs \cite{Finite, GH}. In \cite{Sun}, it is shown for a connected graph $G$ with order $n$ and radius $r$ that $\ept(G) \le O\bigp{r \log \frac{n}{r}}$, and the authors construct an example to show tightness of the asymptotic. They also prove for a connected graph $G$ with order $n$ that $\ept(G)\le \frac{n}{2} + o(n)$. Proving further bounds for expected propagation times of different families of graphs has been proposed as an area of study in \cite{GH} and \cite{Sun}. In \cite{english}, the authors have established high probability results for the expected propagation time for the Erd\H{o}s-Renyi graph $G(n,p)$, where $p$ is a function of $n$.

In this paper, we prove bounds on the expected propagation times of several other well-known families of graphs. Our main results are as follows:

\begin{theorem} \label{cityblock} We have $$\bigp{\frac12 - o(1)}(m+n) \le \ept(G_{m\times n}) \le (4+o(1))(m+n).$$
\end{theorem}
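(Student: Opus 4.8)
The plan is to prove the upper and lower bounds separately, since they require fundamentally different techniques. For the grid graph $G_{m \times n}$, the key geometric fact is that the graph diameter is $\Theta(m+n)$, so no propagation process can finish faster than the time needed to traverse the grid. Let me think about both bounds.

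For the **lower bound** $\left(\frac{1}{2} - o(1)\right)(m+n)$: The intuition is that from any starting vertex $v$, there is some vertex $w$ at graph distance roughly $\frac{1}{2}(m+n)$ (a corner of the grid, or the antipodal point). Since the probabilistic color change rule can never force a vertex faster than distance-one-per-timestep (a white vertex can only turn blue if it has a blue neighbor, so the blue region expands by at most one in graph distance per step), the farthest vertex cannot become blue before time $\text{dist}(v, w)$. The expected propagation time is at least this deterministic distance bound. I should pick the starting vertex to minimize the eccentricity, but even the center of the grid has eccentricity about $\frac{1}{2}(m+n)$, giving the $\frac{1}{2}$ coefficient.

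For the **upper bound** $(4 + o(1))(m+n)$: Here I would use a specific starting vertex (say a corner or center) and track how the blue region grows. The natural approach is to analyze propagation along a single row or column first, then fan out. The probabilistic rule means a blue vertex with one white neighbor forces it with probability 1, but with several white neighbors the forcing probability drops. The strategy is to show that with high probability, a "frontier" of blue vertices advances at a constant rate. Concretely, I would first drive a single path (row) of length $n$ to become fully blue in $O(n)$ expected time — this resembles the known path result. Once an entire row is blue, each blue vertex in that row can push into the adjacent row; since each such vertex now has a high $C[u]/\deg u$ ratio once its row-neighbors are blue, the forcing probabilities become favorable and the next row fills quickly. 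Iterating over all $m$ rows, each taking $O(1)$ amortized additional time once seeded, yields total time $O(m + n)$.

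The **main obstacle** is establishing the constant factor $4$ in the upper bound rather than just an $O(m+n)$ bound. Getting a clean constant requires a careful coupling or potential-function argument that controls the slowdown when a blue vertex has multiple white neighbors (so its per-neighbor forcing probability is only $\frac{C[u]}{\deg u} < 1$). I expect the cleanest route is a two-phase analysis: Phase one seeds a full row/column, and Phase two propagates perpendicular. In each phase I would bound the expected time for the blue frontier to advance one step using a geometric-type random variable, then sum over the $\Theta(m+n)$ required advances and apply linearity of expectation. The delicate part is showing that once a vertex has two or more blue neighbors its effective forcing rate is bounded below by a constant, so the total expected delay telescopes to a constant multiple of $m+n$; I would likely invoke or adapt the single-path propagation estimate (the $\Theta(\text{path length})$ result for paths) as a black box to handle the seeding phase and reserve the fresh work for the perpendicular fill, where the doubled blue support accelerates forcing.
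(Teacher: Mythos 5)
Your lower bound is exactly the paper's argument: from any start vertex some corner of the grid is at distance at least $\frac12(m+n-2)$, and the blue region can expand by at most one unit of graph distance per step, so no further justification is needed there. Your upper-bound skeleton (seed one full row, then fill perpendicularly) also matches the paper's. But your Phase~2 analysis has a genuine gap. You claim that after the seed row is blue, the remaining rows each take ``$O(1)$ amortized additional time,'' justified by bounding the expected time for the frontier to advance one step and summing over the $\Theta(m+n)$ advances via linearity of expectation. This does not work: if the ``frontier'' is the set of completely blue rows, then advancing it by one row requires \emph{all} of the roughly $n$ vertices of the next row to turn blue, and the expected time for that is the expected maximum of about $n$ geometric random variables, which is $\Theta(\log n)$, not $O(1)$. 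Filling rows sequentially therefore gives only $O(m\log n + n)$, which fails to be $O(m+n)$ when $m \asymp n$. If instead you let all columns progress in parallel (the correct picture), the total Phase~2 time is the \emph{maximum} over $m$ columns of a sum of $n$ geometric variables, and linearity of expectation gives no control over a maximum.

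What is needed, and what the paper does, is a concentration argument. The paper first invokes the coupling lemma (Lemma~\ref{okay}) to dominate the true process by one in which every white vertex with a blue neighbor turns blue with probability exactly $\frac14$ (valid since $\deg v \le 4$ and $C[u] \ge 1$ in the grid), and applies the same lemma again to reduce Phase~2 to $m$ \emph{independent} paths of length $n$, each started from a blue endpoint. A Chernoff bound then shows each path finishes within $(4+4\eps)n$ steps except with probability exponentially small in $n$, and a union bound over the $m \le n$ paths controls the maximum, yielding $\mathbb{E}[T_2] \le (4+5\eps)n$ for large $n$. Your proposal correctly identifies the obstacle (pinning down the constant $4$ coming from the worst-case forcing probability $\frac14$), but the tool you reach for---amortization plus linearity of expectation---cannot close it; you need an exponential tail bound on each column's completion time together with a union bound, or an equivalent concentration-of-maximum argument.
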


\begin{theorem} \label{dregular} The expected propagation time of a $d$-regular graph $G$ is $O\bigp{n\cdot \frac{\log d}{d}}$ for $d\ge 2$. Furthermore, there exists a family of $d$-regular graphs, with $d$ constant as a function of $n$, such that the expected propagation time is $\Omega\bigp{n \cdot \frac{\log\log d}{d}}$.
\end{theorem}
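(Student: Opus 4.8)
The plan splits into the upper and lower bounds, which rely on essentially independent ideas.

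For the upper bound, the plan is to reduce to the radius estimate $\ept(G)\le O\bigp{r\log\frac nr}$ quoted from \cite{Sun}. First I would show that a connected $d$-regular graph has small radius, namely $r\le \frac{3n}{d+1}$ (indeed its diameter is at most this). This follows from the standard packing argument: along a shortest path realizing the diameter, the closed neighborhoods of every third vertex are pairwise disjoint and each contains $d+1$ vertices, so $n\ge \frac{\mathrm{diam}}{3}(d+1)$. Plugging $r\le\frac{3n}{d+1}$ into $r\log\frac nr$ and using that $t\mapsto t\log\frac nt$ is increasing on $(0,n/e)$ gives $\ept(G)\le O\bigp{\frac nd\log d}$ once $d$ is large; for bounded $d$ the bound $\frac n2+o(n)$ from \cite{Sun} already suffices since $\frac{\log d}{d}=\Theta(1)$.

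For the lower bound I would construct a \emph{ring of cliques}: take $m$ disjoint copies $Q_1,\dots,Q_m$ of $K_{d+1}$, and for each $i$ join $Q_i$ to $Q_{i+1}$ (indices mod $m$) by a constant number of edges, repairing regularity by deleting one intra-clique edge for each added inter-clique edge (a routine edge-swap). The result is a connected $d$-regular graph on $n=(d+1)m$ vertices, with $d$ fixed as $m\to\infty$, in which consecutive cliques are separated by an $O(1)$-edge cut. The key structural feature is that the infection can pass from $Q_i$ into $Q_{i+1}$ only across this sparse cut, so from any start vertex it must traverse $\Omega(m)$ cliques in sequence (on a ring the antipodal clique is $m/2$ away).

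The heart of the matter is a growth cap: for any graph, if $B_t$ denotes the blue set then
$$\mathbb{E}\bigb{\,|B_{t+1}|\ \big|\ \mathcal F_t}\ \le\ |B_t|+|B_t|^2\ \le\ 2|B_t|^2,$$
since each white $v$ is forced with probability at most $\sum_{u\in B_t\cap N(v)}C[u]/d$ and $C[u]\le|B_t|$. Because $|B_t|$ is nondecreasing, iterating this bound through the concave potential $Z_t=\log_2\log_2(|B_t|+3)$, for which Jensen gives $\mathbb{E}[Z_{t+1}\mid\mathcal F_t]\le Z_t+O(1)$, shows that the blue count inside a clique cannot reach $\sqrt d$ before time $\Omega(\log\log d)$. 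Forcing across the sparse cut requires a bridge endpoint $a$ to be blue and to succeed with probability $C[a]/d\le |B_t\cap Q_i|/d$; while fewer than $\sqrt d$ vertices of $Q_i$ are blue, the chance of crossing in a step is $O(1/\sqrt d)$, so over the first $\Omega(\log\log d)$ steps the total crossing probability is $o(1)$. Hence the time $X_i$ to advance from $Q_i$ to $Q_{i+1}$ satisfies $\mathbb{E}[X_i\mid\mathcal F]\ge\Omega(\log\log d)$ regardless of history, and by linearity of expectation over the $\Omega(m)$ sequential crossings (no independence needed) the total expected propagation time is $\Omega(m\log\log d)=\Omega\bigp{\frac nd\log\log d}$, with a constant independent of $d$.

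I expect the main obstacle to be making the per-crossing lower bound fully rigorous. A naive first-moment approach fails because $\mathbb{E}[|B_{t+1}|]\le 2\,\mathbb{E}[|B_t|^2]$ does not close into $\mathbb{E}[|B_{t+1}|]\le 2\,\mathbb{E}[|B_t|]^2$, so rare fast-growth events could inflate $\mathbb{E}[|B_t|]$. Passing to the concave potential $Z_t$ together with the monotonicity of $|B_t|$ sidesteps this, converting the multiplicative squaring dynamics into an additive drift to which Markov's inequality applies. The remaining care lies in coupling the event "blue count below $\sqrt d$" with the small per-step crossing probability, and in checking that the forcing entering $Q_i$ through its backward bridges contributes only an $O(1)$ additive term to the cap.
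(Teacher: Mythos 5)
Your proposal is correct in substance and follows the paper's overall architecture exactly: the upper bound via the packing argument giving diameter (hence radius) at most $\frac{3n}{d+1}$ plugged into the $O\bigp{r\log\frac{n}{r}}$ bound of \cite{Sun}, and the lower bound via a ring of $K_{d+1}$'s joined by sparse cuts with an edge swap to preserve regularity, reduced to a per-crossing lemma of expected cost $\Omega(\log\log d)$ applied to $\Omega\bigp{\frac{n}{d}}$ sequential crossings (the paper likewise uses $\frac{n}{2(d+1)}-1$ crossings to account for the two directions around the ring, and conditions on the forward bridge vertex staying white, which is your ``regardless of history'' step). Where you genuinely diverge is the proof of the per-crossing lemma. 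The paper controls one round at a time with a second-moment argument: with $b$ local blue vertices, Chebyshev's inequality shows that with probability at least $1-\frac{9}{b^2}$ at most $O(b^2)$ new vertices are forced and the exit vertex stays white, and iterating this over rounds in the window $\log d\le b\le d^{1/4}$ (where $\frac{9}{b^2}$ is small) shows that reaching $d^{1/4}$ blue vertices takes $\Omega(\log\log d)$ rounds with constant probability. Your supermartingale argument — bounding $\mathbb{E}\bigb{|B_{t+1}\cap Q_i|\mid\mathcal F_t}\le O\bigp{(|B_t\cap Q_i|+3)^2}$ and pushing this through the concave potential $Z_t=\log_2\log_2(|B_t\cap Q_i|+3)$ to get additive drift $O(1)$, then applying optional stopping — is a valid alternative that buys two things: it avoids the variance computation entirely, and it works uniformly from a single blue vertex rather than only in the window $b\ge\log d$ where Chebyshev has teeth; you also correctly diagnose why the naive first-moment iteration fails. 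The paper's route, in exchange, directly yields a with-high-probability statement per round rather than passing through Markov's inequality. One point needs care in your write-up: your displayed growth cap is stated for the global blue set with $C[u]\le|B_t|$, which is vacuous once earlier cliques are fully blue ($|B_t|\gg d$); the inequality must be stated for the local count $|B_t\cap Q_i|$ with $C[u]\le|B_t\cap Q_i|+2$ (the $+2$ from bridge endpoints), exactly the $O(1)$ correction you flag at the end, so this is a presentational fix rather than a gap.
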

\begin{theorem} \label{hypercube} The expected propagation time of an $n$-dimensional hypercube graph with $2^n$ vertices is $O(n \log n)$. 
\end{theorem}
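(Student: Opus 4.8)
The plan is to analyze the process in two regimes according to the current number of blue vertices $B_t$, writing $N = 2^n$ and exploiting two structural features of $Q_n$: its strong edge-expansion, quantified by the edge-isoperimetric inequality $|\partial_e S| \ge |S|\bigp{n - \log_2|S|}$ for every vertex set $S$, and what I will call the \emph{high-$C$ boost} --- the observation that since $\deg u = n$, a blue vertex $u$ with $C[u]$ of order $n$ forces each white neighbour with probability $C[u]/n = \Omega(1)$. The target $O(n\log n)$ should emerge as roughly $n$ multiplicative phases, each forced to succeed with probability $1 - 1/\mathrm{poly}(n)$ within $O(\log n)$ steps, so that a union bound over phases is affordable and the (nondecreasing) failures contribute only lower-order terms to the expectation.

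First I would treat the \emph{growth regime} $1 \le B_t \le N/2$ and show geometric growth of the blue set. For a white vertex $v$ with $j_v$ blue neighbours, the probability it is forced in one step is $1 - \prod_{u \sim v,\, u \text{ blue}}(1 - C[u]/n) \ge 1 - (1-1/n)^{j_v} \ge j_v/(2n)$, so the expected number of newly blue vertices is at least $\tfrac{1}{2n}\sum_{v \text{ white}} j_v = \tfrac{1}{2n}|\partial_e B|$. Feeding in the isoperimetric inequality gives an expected one-step increase of order $B_t\bigp{1 - \tfrac{\log_2 B_t}{n}}$, which is $\Omega(B_t)$ as long as $B_t \le N^{1-\delta}$; since the newly-forced vertices are independent given the current state, a Chernoff bound upgrades this to a constant-factor increase per $O(1)$ steps, hence a doubling within $O(\log n)$ steps with high probability, and $O(n)$ doublings to reach $N/2$. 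Two points need care: the very start, where a single blue vertex forces $\mathrm{Binomial}(n,1/n)$ neighbours and may force none --- handled by noting $B_t$ is nondecreasing and that escaping the range $B_t = O(1)$ costs only $O(1)$ expected steps --- and the near-balanced end, where the isoperimetric bound degrades and I must instead invoke the high-$C$ boost below.

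For the \emph{cleanup regime} $B_t \ge N/2$, write $W_t = N - B_t$. The key point is that a white vertex $v$ is forced quickly whenever it has a blue neighbour $u$ with $C[u] = \Omega(n)$, since then $\Pr[u \to v] = \Omega(1)$. I would argue that in the majority-blue regime such a neighbour almost always exists through a dichotomy: if blue is locally spread out then $v$ has many blue neighbours, whereas if blue is locally clustered then its blue neighbours carry a large closed-neighbourhood count $C[u]$. This is cleanest at the very end via the structural fact that once any axis-aligned facet $Q_{n-1}$ is entirely blue, each of its vertices $a$ has $C[a] = n$ and hence forces its matched neighbour in the opposite facet with probability exactly $1$, finishing in a single additional step. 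Quantitatively I would use the isoperimetric inequality on the white set to show that only an $o(1)$ fraction of white vertices lack a boosting neighbour, so that $W_t$ contracts by a $1-o(1)$ factor every $O(\log n)$ steps; as $W_t$ falls from $N/2$ to $0$ in $O(n)$ contractions, this phase also costs $O(n\log n)$.

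The main obstacle, I expect, is the intermediate window $B_t \approx N/2$, where neither pure expansion nor the high-$C$ boost is individually overwhelming and where the blue set may sit in an awkward (clustered or thin) configuration that the worst-case isoperimetric bound handles only with the feeble factor $1 \pm \Theta(1/n)$. Controlling the \emph{distribution} of the values $C[u]$ near the boundary --- rather than merely their average --- so as to rule out persistent configurations that are simultaneously sparse and thin along the white boundary, and then upgrading each expected one-step progress into a high-probability per-phase guarantee, are the delicate steps. It is precisely this ``$1-1/\mathrm{poly}(n)$ per phase over $\Theta(n)$ phases'' bookkeeping that I anticipate is responsible for the extra $\log n$ factor beyond the diameter bound $n$.
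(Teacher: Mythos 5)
Your growth-regime analysis follows the same skeleton as the paper's: the edge-isoperimetric inequality plus the per-edge forcing probability of at least $1/n$ gives expected one-step progress $\Omega\bigp{|S|\cdot \frac{n-\log_2 |S|}{n}}$, and concentration turns this into doubling phases. But your bookkeeping --- ``$\Theta(n)$ phases, each completed in $O(\log n)$ steps with probability $1-1/\mathrm{poly}(n)$'' --- breaks exactly where you say it does, near $B_t \approx N/2$, and the mechanism you propose to repair it (the ``high-$C$ boost,'' a dichotomy between locally spread-out and locally clustered configurations, plus control of the distribution of $C[u]$ along the boundary) is never actually proved; you flag it yourself as the main obstacle. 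That unproven step is a genuine gap, and --- this is the key point you are missing --- it is also unnecessary. The phases need not all cost $O(\log n)$: if $2^k \le |S| < 2^{k+1}$, isoperimetry gives at least $2^k(n-k-1)$ boundary edges, hence expected progress $\Omega\bigp{2^k\cdot\frac{n-k-1}{n}}$ per step and expected phase length $O\bigp{\frac{n}{n-k-1}}$. This is indeed $\Theta(n)$ for the equatorial phases, but the total $\sum_{k=0}^{n-2} O\bigp{\frac{n}{n-k-1}} = O(n\log n)$ is a harmonic sum. That harmonic sum, not per-phase high-probability guarantees, is where the extra $\log n$ factor comes from, so your closing conjecture about its origin is also off.

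For the second half of the process, the paper does not need your dichotomy or the facet observation at all: it simply applies the same isoperimetric inequality to the \emph{white} set. When $2^k < |W_t| \le 2^{k+1}$, there are again at least $2^k(n-k-1)$ blue--white edges, each firing with probability at least $1/n$, so the white set halves in expected time $O\bigp{\frac{n}{n-k-1}}$; summing the same harmonic series bounds the cleanup by $O(n\log n)$, and the final white vertex, having all $n$ neighbours blue, is forced in $O(1)$ expected steps. Nowhere is any bound on $C[u]$ beyond the trivial $C[u]\ge 1$ required. With this symmetric two-lemma accounting your first-regime argument completes to a full proof; as written, your proposal does not establish the theorem precisely in the window you identified as problematic.
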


\section{Preliminaries}

We review some well-known tools from probability theory that will be used in the paper.

\begin{theorem}[Chebyshev's Inequality]
Given a random variable $X$, for all $\lambda \ge 0$ we have $$\Pr\bigb{|X - \mathbb{E}[X]| \ge \lambda} \le \frac{\Var{X}}{\lambda^2}.$$
\end{theorem}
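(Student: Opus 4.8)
The plan is to deduce Chebyshev's inequality from Markov's inequality, the more primitive tail bound for nonnegative random variables. The case $\lambda = 0$ is vacuous (the right-hand side is $+\infty$ when $\Var{X} > 0$, and the asserted bound $\Pr[\,\cdots\,] \le 1$ holds trivially), so I would assume $\lambda > 0$ throughout.

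First I would establish \emph{Markov's inequality}: for any nonnegative random variable $Y$ and any $a > 0$,
$$\Pr[Y \ge a] \le \frac{\mathbb{E}[Y]}{a}.$$
The cleanest route is the pointwise indicator bound $a \cdot \mathbbm{1}_{Y \ge a} \le Y$, which holds because the left side equals $a \le Y$ on the event $\{Y \ge a\}$ and equals $0 \le Y$ elsewhere. Taking expectations of both sides and using monotonicity and linearity gives $a\,\Pr[Y \ge a] = a\,\mathbb{E}[\mathbbm{1}_{Y \ge a}] \le \mathbb{E}[Y]$, and dividing by $a > 0$ yields the claim.

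Next I would apply Markov's inequality to the nonnegative variable $Y = (X - \mathbb{E}[X])^2$ with threshold $a = \lambda^2 > 0$. Since squaring is monotone on $[0,\infty)$, the events $\{|X - \mathbb{E}[X]| \ge \lambda\}$ and $\{(X - \mathbb{E}[X])^2 \ge \lambda^2\}$ coincide, so
$$\Pr\bigb{|X - \mathbb{E}[X]| \ge \lambda} = \Pr\bigb{(X - \mathbb{E}[X])^2 \ge \lambda^2} \le \frac{\mathbb{E}[(X - \mathbb{E}[X])^2]}{\lambda^2} = \frac{\Var{X}}{\lambda^2},$$
where the final equality is the definition of variance. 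This completes the argument.

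Since the result is entirely standard, there is no genuine obstacle; the only points requiring care are the edge case $\lambda = 0$ and the justification of the indicator inequality underlying Markov's bound. If one prefers to avoid indicator functions altogether, the same conclusion follows by writing $\mathbb{E}[Y] = \mathbb{E}[Y\,\mathbbm{1}_{Y \ge a}] + \mathbb{E}[Y\,\mathbbm{1}_{Y < a}]$ and discarding the nonnegative second term before bounding $Y \ge a$ on the first.
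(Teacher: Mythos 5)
Your proof is correct and is the standard derivation: Markov's inequality applied to $(X-\mathbb{E}[X])^2$ with threshold $\lambda^2$, with the degenerate case $\lambda = 0$ handled separately. The paper itself states Chebyshev's inequality without proof, as a standard preliminary tool, so there is no authorial argument to compare against; your write-up supplies exactly the textbook proof one would expect, and the details (the indicator bound for Markov, the equivalence of the events $\{|X-\mathbb{E}[X]|\ge\lambda\}$ and $\{(X-\mathbb{E}[X])^2\ge\lambda^2\}$) are all sound.
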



\begin{theorem}[Chernoff Bound]
Let $X_1, X_2, \ldots, X_n$ be independent random variables taking on values in $\{0,1\}$. Let $X = \displaystyle \sum_{i = 1}^{n} X_i$ and $\mu = \mathbb{E}[X]$. Then $$\Pr[X \le (1 - \delta) \mu] \le e^{-\frac12\mu \delta^2}.$$
\end{theorem}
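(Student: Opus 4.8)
The plan is to use the exponential-moment (Bernstein--Chernoff) method: for a parameter $t>0$ to be chosen later, rewrite the lower-tail event in terms of $e^{-tX}$ and apply Markov's inequality. Since $x\mapsto e^{-tx}$ is strictly decreasing, the event $\{X\le(1-\delta)\mu\}$ coincides with $\{e^{-tX}\ge e^{-t(1-\delta)\mu}\}$, so Markov gives
$$\Pr[X\le(1-\delta)\mu]\le e^{t(1-\delta)\mu}\,\mathbb{E}\bigb{e^{-tX}}.$$
The first step is then to control the moment generating function $\mathbb{E}[e^{-tX}]$.

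For this I would exploit independence to factor $\mathbb{E}[e^{-tX}]=\prod_{i=1}^n\mathbb{E}[e^{-tX_i}]$. Writing $p_i=\Pr[X_i=1]$ so that $\mu=\sum_i p_i$, each factor equals $1+p_i(e^{-t}-1)$, and the elementary inequality $1+x\le e^x$ yields $\mathbb{E}[e^{-tX_i}]\le\exp\!\bigp{p_i(e^{-t}-1)}$. Multiplying over $i$ collapses the product to $\mathbb{E}[e^{-tX}]\le\exp\!\bigp{\mu(e^{-t}-1)}$, which is the only place independence is used. Substituting back gives
$$\Pr[X\le(1-\delta)\mu]\le\exp\!\bigp{\mu\bigb{(e^{-t}-1)+t(1-\delta)}}.$$

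Next I would optimize the exponent over $t>0$. Differentiating $(e^{-t}-1)+t(1-\delta)$ and setting the derivative to zero gives $e^{-t}=1-\delta$, i.e. $t=\ln\tfrac{1}{1-\delta}>0$ for $0<\delta<1$; this choice is admissible precisely in the nontrivial regime (for $\delta\ge1$ the event has probability $0$ or reduces to $\Pr[X=0]$ and the bound is immediate). Plugging in produces the sharp form
$$\Pr[X\le(1-\delta)\mu]\le\exp\!\bigp{\mu\bigb{-\delta-(1-\delta)\ln(1-\delta)}}.$$

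It remains to convert this into the stated clean bound $e^{-\mu\delta^2/2}$, and this last calculus estimate is where I expect the only real work. The task reduces to the single-variable inequality $(1-\delta)\ln(1-\delta)\ge-\delta+\tfrac{\delta^2}{2}$ on $[0,1)$. I would prove it by setting $g(\delta)=(1-\delta)\ln(1-\delta)+\delta-\tfrac{\delta^2}{2}$, noting $g(0)=0$, and computing $g'(\delta)=-\ln(1-\delta)-\delta$; since $1-\delta\le e^{-\delta}$ gives $-\ln(1-\delta)\ge\delta$, we have $g'\ge0$, so $g$ is nondecreasing and hence nonnegative. Combining this with the displayed bound forces the exponent to be $\le-\mu\delta^2/2$, completing the proof. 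The main obstacle is purely this monotonicity estimate; the probabilistic content (Markov plus independence) is routine.
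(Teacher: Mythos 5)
Your proof is correct and complete: the Markov/exponential-moment step, the factorization of the MGF via independence, the optimal choice $e^{-t}=1-\delta$, and the final calculus estimate $(1-\delta)\ln(1-\delta)\ge -\delta+\tfrac{\delta^2}{2}$ (via $g'(\delta)=-\ln(1-\delta)-\delta\ge 0$) are all accurate, and you handle the degenerate regime $\delta\ge 1$ appropriately. Note that the paper itself states this Chernoff bound as a standard preliminary tool without proof, so there is no in-paper argument to compare against; yours is precisely the canonical derivation one would expect.
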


\begin{theorem}[Edge Isoperimetric Inequality for Hypercubes] Let $S$ be any set of vertices in a dimension $n$ hypercube graph with $2^n$ vertices. Then the number of edges in $G$ between a vertex in $S$ and a vertex not in $S$ is at least $|S|(n - \log_2{|S|})$.
\end{theorem}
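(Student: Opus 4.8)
The plan is to convert the desired lower bound on the edge boundary into an equivalent upper bound on the number of \emph{internal} edges of $S$, and then to prove that upper bound by induction on the dimension. Write $e(S)$ for the number of edges of $G$ with both endpoints in $S$, and write $\partial S$ for the number of edges with exactly one endpoint in $S$ (the quantity we wish to bound from below). Since every vertex of the hypercube has degree exactly $n$, summing degrees over $S$ counts each internal edge twice and each boundary edge once, so that $n|S| = 2e(S) + \partial S$. Hence $\partial S = n|S| - 2e(S)$, and the claimed inequality $\partial S \ge |S|(n - \log_2|S|)$ is exactly equivalent to the clean statement
$$e(S) \le \tfrac12 |S|\log_2|S|,$$
which attains equality for subcubes and which I would establish directly.

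To prove this internal-edge bound I would induct on the dimension $n$, the base case $n = 0$ (a single vertex with no edges) being trivial under the convention $0\log_2 0 = 0$. For the inductive step, decompose $G = Q_n$ into the two $(n-1)$-dimensional subcubes $Q^0$ and $Q^1$ on which the first coordinate is fixed to $0$ and to $1$, together with the perfect matching $M$ of $2^{n-1}$ edges joining corresponding vertices. Setting $S_0 = S\cap Q^0$, $S_1 = S \cap Q^1$, $a = |S_0|$, and $b = |S_1|$, every internal edge of $S$ lies in $Q^0$, in $Q^1$, or in $M$; since a matching edge lies inside $S$ only when \emph{both} of its endpoints lie in $S$, at most $\min(a,b)$ matching edges are internal. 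Applying the induction hypothesis inside each subcube then gives
$$e(S) \le \tfrac12 a\log_2 a + \tfrac12 b\log_2 b + \min(a,b).$$

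The crux is therefore the purely analytic inequality $f(a) + f(b) + \min(a,b) \le f(a+b)$ with $f(x) = \tfrac12 x\log_2 x$, and this is the step I expect to require the most care. Assuming without loss of generality that $a \le b$ and fixing $s = a+b$, I would study $g(a) = f(a) + f(s-a) + a$ on the interval $[0, s/2]$. A direct computation shows that $g$ is convex there, since $g''(a)$ is a positive multiple of $\tfrac1a + \tfrac1{s-a}$; consequently $g$ attains its maximum on $[0,s/2]$ at one of the two endpoints, and one checks that $g(0) = f(s)$ and $g(s/2) = f(s)$. Thus $g(a) \le f(s)$ throughout, which upon substitution yields $e(S) \le \tfrac12|S|\log_2|S|$ and closes the induction. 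The only subtlety is that $a$ and $b$ are integers rather than reals, but since the inequality is verified for all real arguments in the relevant range it holds in particular at the integer values, and combining it with the degree identity $\partial S = n|S| - 2e(S)$ completes the proof.
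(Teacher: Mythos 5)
Your proof is correct, but there is nothing in the paper to compare it against: the paper quotes this edge-isoperimetric inequality in its Preliminaries as a known tool and never proves it, using it only as a black box in Lemmas \ref{hypercube1} and \ref{hypercube2}. Your argument is the standard textbook proof of the result, and it is sound at every step. The reduction via the degree identity $n|S| = 2e(S) + \partial S$ correctly converts the boundary lower bound into the internal-edge upper bound $e(S) \le \frac12 |S|\log_2|S|$; the decomposition of $Q_n$ into two copies of $Q_{n-1}$ plus a perfect matching correctly yields $e(S) \le f(a) + f(b) + \min(a,b)$ with $f(x) = \frac12 x \log_2 x$, since a matching edge is internal only when both endpoints lie in $S$; and the analytic core checks out: with $s = a+b$ and $g(a) = f(a) + f(s-a) + a$ on $[0, s/2]$, one has $g''(a) = \frac{1}{2\ln 2}\bigp{\frac1a + \frac1{s-a}} > 0$, so the maximum is at an endpoint, and indeed $g(0) = f(s)$ (using the continuity convention $f(0)=0$) while $g(s/2) = \frac{s}{2}\bigp{\log_2 s - 1} + \frac{s}{2} = f(s)$, closing the induction. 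Your remark that verifying the inequality over the reals suffices for integer $a, b$ is also right. One could alternatively prove the statement by compressions or by the Harper/Bernstein approach, which give the sharper cube-by-cube structure of extremal sets, but for the bound as stated your convexity induction is the most economical route and is exactly what a self-contained version of this paper would include.
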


We will also use a coupling argument proven in \cite{Sun}.

\begin{lemma}[\cite{Sun}, Lemma 2.14]\label{okay}

Suppose that initially in some graph $G$, some set of vertices $S$ is colored blue. We follow a modified probabilistic process where at the $t$-th point in time, for any connected blue vertex $u$ and white vertex $v$, the probability $\Pr_t[u \rightarrow v]$ that $u$ converts $v$ to become blue at the $t$-th step, is some function of $G,u,v$, and $B_{t-1}$, the set of blue vertices after the $(t-1)$-th step. In addition, suppose that for all $u, v$, $\Pr_t[u \rightarrow v] \le \frac{C[u]}{\deg u}$. Then the expected propagation time of $S$ under this modified probabilistic color change rule is less than or equal to the expected propagation time of $S$ under the original probabilistic color change rule.
\end{lemma}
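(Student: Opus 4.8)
The plan is to prove the comparison by a monotone coupling of the two processes on a common probability space. I will show that one can run the original rule and the modified rule together so that, almost surely, the blue set produced by the rule with the pointwise-larger forcing probabilities contains the blue set produced by the other rule at every time step. The clean statement I aim for is the following monotonicity: if two forcing rules $A$ and $B$ satisfy $\Pr^A_t[u\to v]\ge\Pr^B_t[u\to v]$ at every step $t$, every adjacent pair $(u,v)$, and every attainable blue set, then, started from the same $S$, rule $A$ reaches the all-blue configuration no later than rule $B$, so $\ept(G,S)$ under $A$ is at most $\ept(G,S)$ under $B$. Lemma~\ref{okay} is the instance in which $A$ is the original rule $\Pr[u\to v]=C[u]/\deg u$ and $B$ is the dominated modified rule, which yields the asserted relation between the two expected propagation times.

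To build the coupling I would attach to each ordered pair of adjacent vertices $(u,v)$ and each step $t$ an independent uniform variable $U_{u,v,t}\in[0,1]$, shared by both processes, and declare that under rule $A$ vertex $u$ forces $v$ at step $t$ precisely when $u$ is blue (in $A$) and $U_{u,v,t}\le\Pr^A_t[u\to v]$, and similarly for $B$ with its own threshold; this is faithful because in both processes the per-pair forcing events are independent. The crucial structural input is that the original forcing probability $C[u]/\deg u$ is monotone in the blue set: $\deg u$ is fixed, while $C[u]$, the number of blue vertices in the closed neighborhood of $u$, can only increase as the blue set grows. Hence whenever $B^B_{t-1}\subseteq B^A_{t-1}$ we obtain, for every edge, $\Pr^B_t[u\to v]\le C^{(B)}[u]/\deg u\le C^{(A)}[u]/\deg u=\Pr^A_t[u\to v]$, where the first inequality is the hypothesis of the lemma and the second is monotonicity applied to the containment of blue sets.

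With the shared uniforms fixed, I would prove $B^B_t\subseteq B^A_t$ for all $t$ by induction. Given $B^B_{t-1}\subseteq B^A_{t-1}$, any blue vertex $u$ of $B$ is also blue in $A$, and by the per-edge inequality above $U_{u,v,t}\le\Pr^B_t[u\to v]$ forces $U_{u,v,t}\le\Pr^A_t[u\to v]$; therefore every vertex that becomes blue in $B$ at step $t$ either was already blue in $A$ or is forced blue in $A$ at step $t$, giving $B^B_t\subseteq B^A_t$. Consequently $A$ attains the all-blue state no later than $B$ on every sample path, so the propagation time of $A$ is pointwise at most that of $B$; taking expectations, and noting the inequality is preserved when either side equals $+\infty$, gives the desired comparison of expected propagation times. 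I expect the main obstacle to be the second step: because the two blue sets genuinely diverge over time, the modified rule is evaluated at its own smaller set $B^B_{t-1}$, so the lemma's hypothesis $\Pr^B_t[u\to v]\le C[u]/\deg u$ must first be transported---through monotonicity of $C[u]$ and the inductive containment---into a bound against the original rule evaluated at the larger set $B^A_{t-1}$. Making this transport valid simultaneously for all edges active in $B$, and confirming that the independent-uniform coupling faithfully reproduces both processes' laws, is the technical heart of the argument.
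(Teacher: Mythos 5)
Your proof is correct and is essentially the argument of the cited source: the paper itself gives no proof of this lemma, introducing it only as \enquote{a coupling argument proven in \cite{Sun}}, and your shared-uniform coupling with the inductive containment $B^B_t \subseteq B^A_t$---enabled by the monotonicity of $C[u]$ in the blue set, which is exactly the transport step you correctly identify as the technical heart---is that coupling argument. One discrepancy you should flag rather than gloss over: with $A$ the original rule and $B$ the dominated modified rule, your construction yields $\ept$ under $A$ at most $\ept$ under $B$, i.e.\ the modified process is \emph{slower}, whereas the lemma as printed asserts the reverse inequality, which is false as literally stated (a modified rule with $\Pr_t[u\rightarrow v]\equiv 0$ never finishes, so its $\ept$ is infinite). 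The printed statement is a transcription slip: the direction you prove is the one the paper actually invokes immediately after the lemma (\enquote{the expected propagation time under this modified process \dots\ is at least as large as that of the original process}), so your argument establishes the intended and usable lemma, but your closing claim that it \enquote{yields the asserted relation} is true only of the corrected statement, not the one printed here.
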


\section{Grid graphs}

In this section we prove Theorem \ref{cityblock}. For a grid graph $G_{m\times n}$ on $m\times n$ vertices,  given any initial vertex, one of the four corner vertices of $G_{m,n}$ is a distance of at least $\frac{1}{2}(m+n-2)$ away from the initial vertex. Hence it must take at least this amount of time to color it blue, and the lower bound $\ept(G_{m\times n}) \ge \bigp{\frac{1}{2}-o(1)}(m+n)$ follows immediately. Thus it suffices to show $\ept(G_{m\times n}) \le (4+o(1))(m+n)$.

We consider a modified color change rule, in which a vertex that is currently white and has a blue neighbor becomes blue with probability exactly $\frac14$. Note that each vertex $v$ in $G_{m\times n}$ has $\deg v\le 4$, so the probability of a blue vertex forcing any adjacent white vertex is at least $\frac14$. By Lemma \ref{okay}, the expected propagation time under this modified process, which we denote by $\ept'$, is at least as large as that of the original process.

\begin{lemma}\label{corner-lemma-grid}
We have $\ept'(G_{m\times n}, \{v_\text{corner}\})\le (4+o(1))(m+n)$ for any of the four corner vertices $v_\text{corner}\in G$.
\end{lemma}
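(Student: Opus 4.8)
The plan is to place the corner vertex $v_\text{corner}$ at the origin and index the remaining vertices by $(i,j)$ with $0\le i\le m-1$, $0\le j\le n-1$, so that $(i,j)$ lies at graph distance $i+j$ from the origin. Writing $T_{i,j}$ for the (random) time at which $(i,j)$ turns blue under the modified process, the quantity to bound is $\ept'(G_{m\times n},\{v_\text{corner}\})=\bE\bigb{\max_{i,j}T_{i,j}}$, the time at which the last vertex turns blue. The central idea is that, because under the modified rule each white vertex with a blue neighbor flips blue with probability exactly $\tfrac14$ per step, the infection can only travel faster than a ``single-file'' process that crawls along one fixed monotone lattice path, where each edge independently costs a geometric waiting time. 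I would use this to prove the stochastic domination $T_{i,j}\preceq S_{i+j}$, where $S_s$ denotes a sum of $s$ i.i.d.\ $\mathrm{Geom}(\tfrac14)$ variables (support $\{1,2,\dots\}$, mean $4$).

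To prove the domination I would induct on $i+j$, the base case $T_{0,0}=0=S_0$ being trivial. Fix a neighbor $u$ of $v=(i,j)$ closer to the origin, at distance $i+j-1$. Let $G=\min\{s\ge 1: U_{v,T_u+s}\le \tfrac14\}$, where the $U_{v,\cdot}$ are the coins governing $v$; since $T_u$ is a stopping time, the coins at times past $T_u$ are independent of $T_u$, so $G$ is a $\mathrm{Geom}(\tfrac14)$ variable independent of $T_u$. Once $u$ is blue, $v$ has a blue neighbor, so $v$ is blue by step $T_u+G$ at the latest (and possibly earlier, if another neighbor helps); hence $T_v\le T_u+G$. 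Combining $T_u\preceq S_{i+j-1}$ from the induction with the independent geometric $G$, and using that stochastic domination is preserved under convolution, gives $T_v\preceq S_{i+j}$.

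With the domination in hand, I would control the maximum by a union bound, using the reformulation $\Pr[S_s>t]=\Pr[\mathrm{Bin}(t,\tfrac14)<s]$: the sum of $s$ geometric waiting times exceeds $t$ exactly when fewer than $s$ successes occur in $t$ Bernoulli$(\tfrac14)$ trials. Setting $N=m+n$ and noting $i+j\le N-2$, for every $t\ge 4N$ each term obeys $\Pr[T_{i,j}>t]\le \Pr[\mathrm{Bin}(t,\tfrac14)<N]$, and since $\bE[\mathrm{Bin}(t,\tfrac14)]=t/4\ge N$ the stated Chernoff bound yields $\Pr[\mathrm{Bin}(t,\tfrac14)<N]\le \exp\!\bigp{-\tfrac{(t/4-N)^2}{2(t/4)}}$. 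Taking $\lambda=\sqrt N\log N=o(N)$ and writing $\bE[\max T]=\sum_{t\ge0}\Pr[\max T> t]$, I would split the sum at $t_0=4N+\lambda$: the initial block contributes at most $4N+\lambda$, while for the tail I bound $\Pr[\max T>t]\le N^2\exp\!\bigp{-(t/4-N)^2/(t/2)}$ and check that this sums to $o(1)$ (the threshold term already satisfies $N^2 e^{-\log^2 N/32}\to 0$, and the Gaussian-then-exponential decay makes the remaining terms negligible). This yields $\bE[\max T]\le 4N+o(N)=(4+o(1))(m+n)$.

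The main obstacle, and the reason the bound is not simply the $4(m+n)$ one gets for the single farthest vertex, is that one must bound the \emph{maximum} over all $\sim mn$ vertices rather than a single infection time. The delicate point is choosing $\lambda=o(N)$ large enough that the Chernoff decay beats the union bound over $mn\le N^2$ vertices, yet small enough to be negligible against $N$, so that the constant $4$ survives. The monotone coupling behind the domination is intuitively clear but needs the stopping-time and independence bookkeeping above to be made rigorous; everything else is routine tail estimation.
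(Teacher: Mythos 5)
Your proof is correct, and it takes a genuinely different route from the paper's. The paper argues in two phases: it first forces the short side of the grid, bounding $\mathbb{E}[T_1]\le 4m$ by summing geometric expectations, then invokes Lemma~\ref{okay} a second time to dominate the remaining process by $m$ \emph{independent} columns of length $n$, applies Chernoff plus a union bound over those $m$ paths, and converts the resulting high-probability statement into an expectation bound via a restart inequality $\mathbb{E}[T_2]\le (4+4\eps)n/(1-me^{-cn})$. You instead give a first-passage-percolation style argument: the per-vertex stochastic domination $T_{i,j}\preceq S_{i+j}$ by a sum of i.i.d.\ $\mathrm{Geom}\bigp{\frac14}$ variables along a monotone lattice path, proved by induction with stopping-time/fresh-coin bookkeeping (which is exactly right, and survives the fact that $T_u$ may depend on $v$'s earlier coins, since $G$ uses only $v$'s coins strictly after $T_u$), followed by a union bound over all $mn$ vertices and direct tail integration of $\max_{i,j}T_{i,j}$. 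Your route needs no independent-columns reduction at all, since the union bound uses only marginal tails; it treats $m$ and $n$ symmetrically without the WLOG $m\le n$; and it yields the more explicit second-order term $4(m+n)+O\bigp{\sqrt{m+n}\,\log(m+n)}$ rather than $(4+\eps)(m+n)$ for each fixed $\eps$. The price is a union bound over $mn$ vertices instead of $m$ paths, which your choice $\lambda=\sqrt{N}\log N$ comfortably absorbs, as the Chernoff tail at $4N+\lambda$ is $e^{-\Theta(\log^2 N)}\ll N^{-2}$. One wording caveat: under the modified rule the flip probability is exactly $\frac14$ regardless of how many neighbors are blue, so ``another neighbor helps'' only in the sense that $v$'s geometric clock may start before $T_u$, not that the rate increases; your inequality $T_v\le T_u+G$ is unaffected by this.
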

 
\begin{proof}
Without loss of generality, assume $m \le n$. Assign Cartesian coordinates $(i, j)\in [0,m-1]\times [0,n-1]$ to the vertices of $G_{m\times n}$. Without loss of generality, suppose that $v_\text{corner} = (0, 0)$. Starting from $v_\text{corner} = (0, 0)$, let $T_1$ denote the time at which all vertices on the $x$-axis are blue. Starting from a configuration in which all vertices on the $x$-axis are blue, let $T_2$ denote the amount of time it takes for all vertices to be blue. Note that $\ept'(G) \le \mathbb{E}[T_1] + \mathbb{E}[T_2]$. 

The expected time needed for all vertices on the $x$-axis to be blue is at most the expected propagation time for a modified probabilistic color change process on a line, starting with an end vertex blue and such that every blue vertex forces a white neighbor with probability $\frac14$. Denote by $t_i$ the expected time needed from when $(i,0)$ is colored blue to the time when $(i+1,0)$ is colored blue, so that $\mathbb{E}[T_1] \le \displaystyle \sum_{i=0}^{m-1} t_i.$ Using the relationship between the number of trials and expected amount of time to success, the expected number of trials from when $(i,0)$ is colored blue to the time when $(i+1,0)$ is at most $\bigp{\frac{1}{4}}^{-1} = 4$. We conclude that $t_i \leq 4$ for all $i$, so $\mathbb{E}[T_1] \le 4m - 4$.

Now consider the point in time at which the entire $x$-axis is blue. At this point, by Lemma \ref{okay}, $\mathbb{E}[T_2]$ is bounded above by the expected time it takes to color $m$ independent paths of length $n$ blue, starting from an end vertex of each path. Fix any constant $\eps > 0$. We claim that with exponentially small probability in $n$, the propagation time on any one path is more than $(4+4\eps)n$. The probability that the propagation time on a path of length $n$ is more than $(4+4\eps)n$ is at most the probability that the sum of $(4+4\eps)n$ independent $\text{Bern}(\frac14)$ random variables is at most $n$. By the Chernoff Bound for $\mu=(1+\eps)n$ and $\delta = \eps/(1+\eps)$, we have that this is bounded above by $e^{-\frac12 \mu \delta^2} = e^{-\frac{\eps^2}{2(1+\eps)}n}$. By a union bound over all $m\le n$ paths, the probability that not all paths are completely blue after $(1+\eps)n$ steps is at most $me^{-\frac{\eps^2}{2(1+\eps)}n}$. Using the relationship between the number of trials and expected amount of time to success, we have that for every $\eps>0$, $$\mathbb{E}[T_2] \le \frac{(4+4\eps)n}{1-me^{-\frac{\eps^2}{2(1+\eps)}n}} \le (4+5\eps)n$$ for sufficiently large $n$. We conclude that $$\ept'(G_{m\times n}) \le \bE[T_1] + \bE[T_2] \le (4+o(1)))(m+n).$$

\end{proof}

Note that Lemma \ref{corner-lemma-grid} implies Theorem \ref{cityblock} because $\ept'(G_{m\times n},\{v\}) \le \ept(G, \{v\})\le \ept(G)$.

For each $2 \le m, n \le 14$, we run a program to simulate the probabilistic zero-forcing process $1000$ times on an $m \times n$ grid graph. Figure \ref{figure-1} shows the average propagation time for $1000$ trials for each pair $(m,n)$ with $2\le m,n\le 14$.

\begin{figure}[h]
    \centering
    \begin{tabular}{|*{15}{c|}}
    
\hline
ept & 2 & 3 & 4 & 5 & 6 & 7 & 8 & 9 & 10 & 11 & 12 & 13 & 14 \\ \hline 
2 & 2.33 & 3.15 & 3.90 & 4.19 & 4.90 & 5.24 & 5.90 & 6.17 & 6.90 & 7.17 & 7.94 & 8.21 & 8.94\\ \hline 
3 & 3.15 & 3.89 & 4.51 & 4.92 & 5.52 & 5.95 & 6.56 & 6.88 & 7.57 & 7.87 & 8.57 & 8.88 & 9.58\\ \hline 
4 & 3.90 & 4.51 & 5.25 & 5.56 & 6.29 & 6.58 & 7.30 & 7.63 & 8.32 & 8.51 & 9.29 & 9.55 & 10.39\\ \hline 
5 & 4.19 & 4.92 & 5.56 & 5.94 & 6.71 & 6.95 & 7.63 & 7.91 & 8.51 & 8.94 & 9.64 & 9.94 & 10.64\\ \hline 
6 & 4.90 & 5.52 & 6.29 & 6.71 & 7.34 & 7.62 & 8.41 & 8.64 & 9.36 & 9.67 & 10.36 & 10.72 & 11.35\\ \hline 
7 & 5.24 & 5.95 & 6.58 & 6.95 & 7.62 & 8.00 & 8.67 & 8.97 & 9.68 & 10.00 & 10.70 & 11.03 & 11.69\\ \hline 
8 & 5.90 & 6.56 & 7.30 & 7.63 & 8.41 & 8.67 & 9.34 & 9.64 & 10.33 & 10.51 & 11.42 & 11.67 & 12.43\\ \hline 
9 & 6.17 & 6.88 & 7.63 & 7.91 & 8.64 & 8.97 & 9.64 & 9.97 & 10.66 & 10.93 & 11.67 & 11.96 & 12.63\\ \hline 
10 & 6.90 & 7.57 & 8.32 & 8.51 & 9.36 & 9.68 & 10.33 & 10.66 & 11.39 & 11.67 & 12.45 & 12.61 & 13.45\\ \hline 
11 & 7.17 & 7.87 & 8.51 & 8.94 & 9.67 & 10.00 & 10.51 & 10.93 & 11.67 & 12.03 & 12.74 & 13.04 & 13.62\\ \hline 
12 & 7.94 & 8.57 & 9.29 & 9.64 & 10.36 & 10.70 & 11.42 & 11.67 & 12.45 & 12.74 & 13.41 & 13.69 & 14.41\\ \hline 
13 & 8.21 & 8.88 & 9.55 & 9.94 & 10.72 & 11.03 & 11.67 & 11.96 & 12.61 & 13.04 & 13.69 & 14.01 & 14.72\\ \hline 
14 & 8.94 & 9.58 & 10.39 & 10.64 & 11.35 & 11.69 & 12.43 & 12.63 & 13.45 & 13.62 & 14.41 & 14.72 & 15.35\\ \hline 

\end{tabular}
    \caption{Simulated expected propagation times of $m \times n$ grid graphs for $2 \le m, n \le 14$.}
    \label{figure-1}
\end{figure}

\begin{remark} 
Experimentally, for the family of $m\times n$ rectangular graphs, $\ept(G_{m \times n})$ appears to grow asymptotically as $\frac12(m+n)$.
\end{remark}

\section{Regular graphs}

In this section, we prove Theorem \ref{dregular}. Let $k$ denote the diameter of $G$. Suppose that $u$ and $v$ are two vertices in $G$ with shortest distance equal to $k$. Let $S_i$ be the set of vertices with distance exactly $i$ from $u$, so that $S_0 = {u}$, $v \in S_k$, and $S_i = \emptyset$ for $i>k$.

\begin{lemma} For a $d$-regular graph $G$ on $n$ vertices, $k\le \frac{3n}{d + 1}$.
\end{lemma}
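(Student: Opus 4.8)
The plan is to exploit the BFS-layer structure already set up. Since $u$ and $v$ realize the diameter $k$, the eccentricity of $u$ equals $k$, so the shells $S_0, S_1, \ldots, S_k$ are all nonempty, pairwise disjoint, and partition $V(G)$; in particular $\sum_{i=0}^{k} |S_i| = n$. The first structural fact I would establish is that edges of $G$ join only vertices in equal or adjacent shells: if $w \in S_i$ and $w'$ is adjacent to $w$, then $|d(u,w') - i| \le 1$, so $w' \in S_{i-1}\cup S_i \cup S_{i+1}$.

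From this I would deduce the central inequality. For every index $i$ with $S_i \neq \emptyset$, choose any $w \in S_i$; by $d$-regularity its closed neighborhood $N[w]$ has exactly $d+1$ vertices, and by the structural fact it is contained in $S_{i-1}\cup S_i \cup S_{i+1}$ (with the convention $S_{-1} = S_{k+1} = \emptyset$). Hence $|S_{i-1}| + |S_i| + |S_{i+1}| \ge d+1$ for every $0 \le i \le k$.

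To conclude, I would sum this inequality over the ``centers'' $i = 0, 3, 6, \ldots, 3\floor{k/3}$. The index-triples $\{3j-1, 3j, 3j+1\}$ are pairwise disjoint, so the shells they involve are all distinct, and the total number of vertices they account for is therefore at most $n$. Since there are $\floor{k/3}+1$ centers, this gives $n \ge \bigp{\floor{k/3}+1}(d+1) \ge \frac{k+1}{3}(d+1)$, and rearranging yields $k \le \frac{3n}{d+1} - 1 \le \frac{3n}{d+1}$.

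The only delicate points, and where I would be most careful, are the boundary bookkeeping (the empty shells $S_{-1}, S_{k+1}$, and checking that the last triple contributing an empty shell when $3\floor{k/3} = k$ causes no trouble) and the floor estimate $\floor{k/3}+1 \ge (k+1)/3$ needed to turn the count into the clean stated bound. Equivalently, one can phrase the disjointness directly along a geodesic $u = w_0, w_1, \ldots, w_k = v$: the closed neighborhoods $N[w_0], N[w_3], N[w_6], \ldots$ are pairwise disjoint because any two of the chosen $w_{3j}$ lie at distance at least $3$, so their radius-one neighborhoods cannot meet. This rephrasing makes the disjointness self-evident, and it is the version I would likely write up.
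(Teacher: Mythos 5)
Your proof is correct and follows essentially the same route as the paper: partition the BFS shells into disjoint triples of consecutive indices, bound each triple below by $d+1$ using $d$-regularity of the closed neighborhood, and sum. The only difference is bookkeeping --- you center the triples at indices $0, 3, \ldots, 3\floor{k/3}$ rather than the paper's grouping $\{3i, 3i+1, 3i+2\}$, which incidentally handles the boundary case $k \equiv 0 \pmod 3$ (where the paper's last triple is centered at an empty shell and contributes only $|S_k| \ge 1$, not $d+1$) more cleanly, though the paper's conclusion survives that case anyway.
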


\begin{proof} For all $0 < i < k$, $|S_{i - 1}| + |S_i| + |S_{i+1}| > d$, since $S_i$ is nonempty and any vertex in $S_i$ has degree $d$ but can only be connected to vertices in $S_{i-1}$, $S_i$, or $S_{i+1}$. Thus

\begin{equation*} 
\begin{split}
n = & \sum_{i = 0}^{\floor{\frac{k}{3}}} \bigp{|S_{3i}| + |S_{3i+1}| + |S_{3i+2}|} \ge \bigp{d+1}\bigp{{\floor{\frac{k}{3}}} + 1} \geq \bigp{d+1}\frac{k}{3},
\end{split}
\end{equation*}
implying $$k \le \frac{3n}{d + 1}.$$

\end{proof}

\begin{lemma}  Let $H$ be a star graph with $n$ leaves, with its center blue and all other vertices colored white. If $t > 12 \log(n + 1)$, the blue vertex will propagate to all leaves in $t$ steps with probability at least $1 - (0.97)^t$.
\end{lemma}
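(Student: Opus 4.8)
The plan is to reduce the lemma to a single-shot estimate and then bootstrap it into the geometric tail. First I would pin down the one-step dynamics: on a star the only forcing runs from the center to the leaves, so the state is the number $b$ of blue leaves, and with $b$ blue leaves we have $C[\text{center}] = b+1$ and $\deg(\text{center}) = n$. Hence in one step each of the $W = n-b$ white leaves is forced independently with probability $\frac{b+1}{n}$, equivalently stays white with probability $\frac{W-1}{n}$. I would record two consequences: for $b\le n/2$ the expected number of newly blue leaves is $(n-b)\frac{b+1}{n}\ge \frac{b+1}{2}$, so the blue leaves tend to grow by a constant factor per step; and once $b\ge n/2$ every white leaf is forced with probability at least $\frac12$ at each later step, since the force probability only increases with $b$.

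With this, I would prove a single-shot estimate: started from all leaves white, the process finishes within $\beta := 12\log(n+1)$ steps except with probability $q_0$, where $q_0$ can be made smaller than any fixed negative power of $n+1$. I would split the run into three phases. \textbf{Ignition} ($0 \to \Theta(\log n)$ blue): for every $b\le n/2$ one has $\Pr[\text{no new blue leaf}] = \bigp{\frac{n-b-1}{n}}^{\,n-b}\le 1/e$, so each step increases $b$ with probability at least $1-1/e$ irrespective of the past; stochastic domination by i.i.d.\ Bernoulli variables plus a Chernoff bound shows $b$ reaches $C\log n$ within $O(\log n)$ steps outside an event of probability $n^{-\Omega(1)}$. \textbf{Geometric growth} ($C\log n \to n/2$): here the first consequence and a Chernoff bound give at least $b/4$ new blue leaves with per-step failure $e^{-\Omega(b)}\le n^{-\Omega(C)}$, so $b$ multiplies by a factor $\ge 5/4$ each step and reaches $n/2$ in $O(\log n)$ steps, the union bound over these steps keeping the failure at $n^{-\Omega(C)}$. \textbf{Mop-up} ($b\ge n/2 \to$ all blue): by the second consequence a fixed leaf stays white for $k$ more steps with probability below $2^{-k}$, so a union bound over at most $n/2$ leaves finishes everything in $O(\log n)$ steps except with probability at most $\frac n2 2^{-k}=n^{-\Omega(1)}$. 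Summing the three phase lengths and taking $C$ large, the total stays within the budget $\beta=12\log(n+1)$ and $q_0\le (n+1)^{-\delta}$ for a $\delta$ I can push above any threshold by enlarging $C$.

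To upgrade this to the geometric tail I would use a restart argument. Because adding blue leaves can only accelerate the process (a routine monotone coupling), the single-shot bound holds from \emph{every} reachable configuration, each of which has at least as many blue vertices as the all-white start. The Markov property then gives $\Pr[T>k\beta]\le q_0^{\,k}$, hence $\Pr[T>t]\le q_0^{\lfloor t/\beta\rfloor}$ for all $t$. Choosing $\delta$ so that $q_0 \le 0.97^{2\beta}$, a short computation splitting on $t\in[\beta,2\beta)$ versus $t\ge 2\beta$ (where $\lfloor t/\beta\rfloor \ge t/(2\beta)$) turns this into $\Pr[T>t]\le 0.97^{t}$ for every $t\ge \beta=12\log(n+1)$, which is exactly the claim.

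The conceptual obstacle is the small-$b$ regime. The drift is geometric for all $b\le n/2$, but Chernoff concentration is worthless while $b=O(1)$, so the ignition phase cannot be run with concentration and must instead be controlled purely through the constant per-step success probability $1-1/e$; choosing the threshold $\Theta(\log n)$ at which concentration takes over, and checking that the per-step Chernoff failures in the growth phase are summable, is the main work. By contrast, landing on the specific block length $12\log(n+1)$ and base $0.97$ is bookkeeping: it amounts to budgeting the three phase lengths and fixing the growth threshold $C$ so that the single-shot failure $q_0$ is a small enough power of $n+1$.
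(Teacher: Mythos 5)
Your high-level architecture (a single-shot bound on one block of $\beta = 12\log(n+1)$ steps, then monotone coupling plus the Markov property to restart and get a geometric tail) is sound, and your restart computation correctly identifies what the single-shot bound must deliver: $q_0 \le 0.97^{2\beta} = (n+1)^{24\ln 0.97}\approx (n+1)^{-0.73}$. The genuine gap is that your three phases, with the estimates you state, provably cannot fit inside the budget $\beta$, and your claim that the constants close ``by taking $C$ large'' runs the tradeoff backwards. Concretely: in your growth phase, Chernoff with mean $\mu \ge (b+1)/2$ and target $b/4$ gives per-step failure $e^{-(b+1)/16}$, and the union over steps starting from $b = C\log n$ is dominated by the first term, $\approx n^{-C/16}$; to push this below $(n+1)^{-0.73}$ you are forced to take $C \ge 16\cdot 0.73 \approx 12$. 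But your ignition phase credits at most one new blue leaf per step, so reaching $C\log n$ blue leaves costs at least $C\log n \ge 12\log n$ steps \emph{even if every single step succeeds} --- that alone exhausts the whole budget $12\log(n+1)$, before adding the growth phase ($\log_{5/4}(n/2)\approx 4.5\log n$ steps) and the mop-up phase (you need $\tfrac n2 2^{-k}\le (n+1)^{-0.73}$, i.e.\ $k\approx 2.5\log n$); realistically ignition costs $\approx (1-1/e)^{-1}C\log n$ plus Chernoff slack, i.e.\ over $25\log n$. Enlarging $C$ only lengthens ignition further, so no choice of your parameters meets the budget. A separate, smaller error: the assertion that $q_0$ can be made smaller than \emph{any} fixed negative power of $n+1$ within a fixed window of $12\log(n+1)$ steps is false, since with probability $(1-\tfrac1n)^{n\beta}\approx (n+1)^{-12}$ no leaf at all is forced during the entire window, so $q_0 \ge (n+1)^{-12-o(1)}$.

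What your argument does establish is the lemma with $12$ replaced by a larger absolute constant (and $0.97$ by some other $\alpha<1$), which is strictly weaker than the statement; the specific constants matter here because the paper consumes them downstream in the computation of Lemma \ref{dregular1}. Note the paper's own proof is of a completely different nature: it is a citation, observing that the proof of Lemma 3.4 of \cite{Sun} applies verbatim and that the constants there evaluate to $C = 11.2 < 12$ and $\alpha = (2/e)^{1/10} < 0.97$. To rescue your self-contained route you would need sharper one-step inputs --- e.g.\ mean growth $\approx b+1$ (not $(b+1)/2$) while $b \le n/10$, and a geometric-series tail for the ignition waiting times (sum of independent geometric variables) instead of Bernoulli--Chernoff domination --- precisely the kind of constant engineering that the cited lemma packages up.
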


\begin{proof} 
The proof is identical to that of Lemma 3.4 in \cite{Sun}, noting that the constants $C_1, C_2, C$, and $\alpha$ that appear there satisfy $C_1 = C_2 = 0.56 >  \log_{6} e$, $C = 10(C_1 + C_2) = 11.2$ and $\alpha = (\frac{2}{e})^{\frac{1}{10}} < 0.97$.
\end{proof}

We will now show the upper bound on expected propagation time for a $d$-regular graph. More precisely, we prove the below lemma.

\begin{lemma}\label{dregular1}
For a $d$-regular graph $G$ on $n$ vertices, $\ept(G) \le 60 \frac{\log(\frac{(d+1)}{3})}{d + 1}\cdot n + O(\log n)$.
\end{lemma}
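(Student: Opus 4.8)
The plan is to run the process from any single vertex $u$ and color the graph one breadth-first layer at a time. Since every vertex of $G$ has eccentricity at most the diameter $k$, the layers $S_0 = \{u\}, S_1, S_2, \dots$ around $u$ reach depth at most $k$ and exhaust $V(G)$, so coloring them in turn colors all of $G$; moreover $\ept(G) \le \ept(G,\{u\})$, so an upper bound for this particular start suffices. The strategy is then to combine a per-layer cost of order $\log d$ with the diameter bound $k \le \frac{3n}{d+1}$, which should produce the target factor $\frac{\log d}{d+1}\cdot n$.

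For the per-layer step I would fix $i$, condition on $S_0 \cup \dots \cup S_i$ being entirely blue, and bound the time $\tau_{i+1}$ until $S_{i+1}$ is blue. Each $x \in S_{i+1}$ has a neighbor $w \in S_i$, and I would view $w$ together with its $d$ neighbors as a star: the $\ell_w$ neighbors of $w$ lying in $S_{i+1}$ are the white leaves to be forced, while the remaining $d - \ell_w$ neighbors lie in $S_{i-1}\cup S_i$ and are already blue. Because these already-blue neighbors contribute to $C[w]$, the true forcing probability $\frac{C[w]}{d}$ dominates the forcing probability of a $d$-leaf star in which $d - \ell_w$ leaves are pre-colored; by Lemma \ref{okay} it therefore suffices to analyze that star, and a pre-colored star is only faster than a fresh $d$-leaf star. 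The star-propagation lemma then says that $w$ forces all of its white leaves within $t$ steps with probability at least $1 - (0.97)^t$ once $t > 12\log(d+1)$, so each fixed $x$ is blue within $t$ steps with at least this probability. A union bound over the $|S_{i+1}|$ vertices, together with the standard tail-to-expectation estimate, should give $\mathbb{E}[\tau_{i+1}] = O\bigp{\log d + \log |S_{i+1}|}$, the second term being the usual cost of taking a maximum of $|S_{i+1}|$ exponentially concentrated times.

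Finally I would sum over layers by linearity: $\ept(G,\{u\}) \le \sum_i \mathbb{E}[\tau_{i+1}] = O\bigp{k\log d} + O\bigp{\sum_i \log|S_{i+1}|}$. Since $\sum_i |S_{i+1}| \le n$ over at most $k$ layers, concavity of $\log$ gives $\sum_i \log|S_{i+1}| \le k\log\frac{n}{k}$; plugging in $k \le \frac{3n}{d+1}$ and simplifying turns both pieces into $O\bigp{\frac{\log((d+1)/3)}{d+1}\cdot n}$, with residual slack absorbed into the additive $O(\log n)$ term (which is genuinely needed only in the dense regime $d = \Theta(n)$, where the main term is itself $\Theta(\log n)$). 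Tracking the star-lemma constant $12$ and the factor $3$ from the diameter bound through these estimates is what should yield the stated constant $60$.

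The step I expect to be the main obstacle is making the per-layer comparison to the star-propagation lemma fully rigorous. The star lemma is calibrated to a center whose degree equals its number of white leaves, whereas here $w$ has degree $d$ but possibly far fewer than $d$ white neighbors in $S_{i+1}$; everything hinges on using the already-blue lower layers to pad $w$ up to a genuine $d$-leaf star and invoking Lemma \ref{okay} in the correct direction. This is also exactly why the argument must proceed strictly layer by layer rather than along independent root-to-vertex paths, where low-out-degree vertices would be forced with probability only $\tfrac1d$ and the acceleration would be lost. The second delicate point is the bookkeeping that keeps the maximum-over-a-layer corrections $\sum_i \log|S_{i+1}|$ from degrading into a spurious $\frac{n\log n}{d}$ factor; it is the concavity bound together with $k \le \frac{3n}{d+1}$ that prevents this.
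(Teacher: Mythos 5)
Your proposal is correct in substance and follows essentially the same route as the paper: the paper's proof of this lemma simply cites Lemma 3.5 of \cite{Sun} --- whose proof is exactly the breadth-first, layer-by-layer star-propagation argument you sketch (star lemma per layer, union bound, concavity over layers) --- to obtain $\ept(G) \le 20\bigp{r\log\frac{n}{r}} + O(\log n)$, and then substitutes the radius bound $r = \frac{3n}{d+1}$. The only discrepancy is the explicit constant: the paper inherits the factor $20$ from the computations in \cite{Sun}, which yields $60$ after substitution, whereas your own accounting (a per-layer threshold of $12\log(d+1)$ from the star lemma plus a union-bound correction of roughly $\frac{\log\abs{S_{i+1}}}{\log(1/0.97)} \approx 33\log\abs{S_{i+1}}$, summed over $k \le \frac{3n}{d+1}$ layers) produces a leading constant well above $60$, so as written your argument proves the lemma only with a larger constant --- harmless for Theorem \ref{dregular}, which uses only the asymptotic form $O\bigp{n\cdot\frac{\log d}{d}}$, but short of the stated numerical bound.
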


\begin{proof}
Let $G$ be a $d$-regular graph and choose some vertex $v$. Let $w \neq v$ be some vertex such that the shortest path from $v$ to $w$ has length $s$. By carrying out the computations in Lemma 3.5 of \cite{Sun}, letting $\alpha = 0.97, C = 11.2, \beta = 0.985, C' = 2.4, C_2 = 20$, if there exists a vertex $u$ such that all vertices in $G$ are at distance at most $r$ from $u$, $\ept(G) \le 20(r \log \frac{n}{r}) + O(\log n)$. Taking $r = \frac{3n}{d+1}$, we have
\begin{align*}
    \ept(G) &\le 20 \frac{3}{d+1} \log(\frac{d+1}{3}) \cdot n + O(\log n)
    \\&\le 60\frac{\log(\frac{d+1}{3})}{d + 1} \cdot n+ O(\log n)
\end{align*}

\end{proof}

Now we construct a family of $d$-regular graphs with expected propagation time $\Omega\bigp{\frac{\log \log d}{d}\cdot n}.$ We assume that $d \geq 5$.

\begin{lemma}\label{dregular2}

Assume that $d+1 \mid n$. Start with $\frac{n}{d+1}$ copies $C_1, C_2, ..., C_{\frac{n}{d+1}}$ of $K_{d+1}$. For each copy $C_i$, designate two distinct vertices $v_{(i, 1)}, v_{(i, 2)} \in C_i$ and delete the edge connecting $v_{(i, 1)}$ and $v_{(i, 2)}$. Then, for all $1 \le i \le {\frac{n}{d+1}}$, insert an edge connecting $v_{(i, 1)}$ and $v_{(i + 1, 2)}$, where indices are taken mod $\frac{n}{d+1}$. Denote by $G$ the resulting graph. Then $\ept(G) = \Omega\bigp{\frac{\log \log d}{d}\cdot n}$.

\end{lemma}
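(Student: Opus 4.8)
The plan is to reduce the global lower bound to a single per-clique statement: once the infection first reaches a clique $C_i$, it takes $\Omega(\log\log d)$ expected steps before the infection can leave $C_i$ and enter a neighboring clique. Granting this, since the $N:=\frac{n}{d+1}$ cliques are arranged in a cycle, any starting vertex lies in one clique, and to color the whole graph blue the infection front must traverse $\Omega(N)$ cliques in sequence; because the front can advance past a clique only after that clique has been crossed, the total expected time will be $\Omega(N\log\log d)=\Omega\bigp{\frac{\log\log d}{d}\cdot n}$. For fixed $d\ge 5$ the per-clique delay $\Omega(\log\log d)$ is simply a positive constant, which is all that is needed since the asymptotics are in $n$.

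The heart of the argument is the per-clique bound, which rests on the observation that inside a clique the number of blue vertices can at most \emph{square} each step. Fix a clique and let $b_t$ be the number of its vertices that are blue $t$ steps after it is first touched, so $b_0\le 2$. Since the clique is $K_{d+1}$ with one edge removed, every white vertex has at most $b_t$ blue neighbours inside the clique, each forcing it with probability at most $\frac{C[u]}{d}\le\frac{b_t+O(1)}{d}$; a union bound gives that each white vertex turns blue with probability at most $\frac{2b_t^2}{d}$, whence the expected number of new blue vertices is at most $(d+1)\cdot\frac{2b_t^2}{d}\le 4b_t^2$ and $\bE[b_{t+1}\mid \mathcal F_t]\le 5b_t^2$. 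I would then take the potential $\phi_t=\log_2\log_2(b_t+16)$, which is concave in $b_t$, and combine Jensen's inequality with the elementary estimate $\log_2\log_2(5x^2+16)\le \log_2\log_2(x+16)+c_0$ to show that $\phi_t-c_0 t$ is a supermartingale for an absolute constant $c_0$. Optional stopping then gives that the first time $b_t$ reaches $\sqrt d$ has expectation $\Omega(\log\log d)$, reflecting the doubly-exponential growth $b_t\approx 2^{2^t}$.

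To convert ``few blue vertices'' into ``the front cannot advance,'' I would track the exit vertex of the clique, the endpoint of the outgoing bridge, which is non-adjacent to the entry vertex and so must be forced from inside the clique. While $b_t$ stays below $d^{1/4}$ its per-step probability of being forced is at most $\frac{2b_t^2}{d}\le \frac{2}{\sqrt d}$, so over the $\Theta(\log\log d)$ steps before $b_t$ could plausibly exceed $d^{1/4}$ the exit is forced with probability $o(1)$; a Markov bound on $\phi_t$ controls the probability that $b_t$ passes $d^{1/4}$ too early, and together these show the exit vertex stays white for $\Omega(\log\log d)$ expected steps. Thus the crossing time $X_i$ of each clique satisfies $\bE[X_i]=\Omega(\log\log d)$.

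For the global assembly I would use that the set of touched cliques is always a contiguous arc of the cycle, growing only at its two ends, and that each end can advance past a clique only after that clique's exit has been forced. Consequently at most two cliques are in progress at any instant, so $\sum_i X_i\le 2T+O(1)$ where $T$ is the propagation time, giving $\bE[T]\ge \tfrac12\sum_i\bE[X_i]-O(1)=\Omega(N\log\log d)$ by linearity, uniformly over the starting vertex. The main obstacle I anticipate is precisely the per-clique lower bound: the blue count grows doubly-exponentially, so all its moments blow up and a naive moment computation or a union bound over the $\Theta(\log\log d)$ early steps is too lossy. The $\log\log$ potential, which linearizes the squaring dynamics, together with the intermediate $d^{1/4}$ threshold for the exit vertex, is what I expect to make the estimate both clean and tight.
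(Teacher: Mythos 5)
Your proposal is correct, and its overall architecture coincides with the paper's: a per-clique lower bound of $\Omega(\log\log d)$ on the time for the infection to cross one clique (Lemma \ref{complete} in the paper), assembled sequentially over the $\frac{n}{d+1}$ cliques with a factor of $2$ for the two frontiers --- the paper's count of at least $\frac{n}{2(d+1)}-1$ forced occurrences of the crossing process is exactly your ``at most two cliques in progress at any instant'' accounting, which you in fact make more explicit and rigorous than the paper does. Where you genuinely differ is in how the per-clique bound is proved. The paper uses second-moment, high-probability control: by Chebyshev's inequality, starting from $b$ blue vertices in a clique, with probability at least $1-\frac{9}{b^2}$ at most $2b^2$ new vertices are forced and the exit $v_{(i,1)}$ stays white; iterating this multiplicatively (following Proposition 2.8 of \cite{GH}), going from $\log d$ to $d^{1/4}$ blue vertices takes $r \ge \frac14\log_3\!\left(\frac{\log d}{\log\log d}\right) = \Omega(\log\log d)$ rounds with probability $\left(1-O\!\left(\frac{1}{(\log d)^2}\right)\right)^{r} = 1-o(1)$. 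You instead use only the first-moment bound $\bE[b_{t+1}\mid \mathcal{F}_t]=O(b_t^2)$ (your constant $5b_t^2$ is slightly optimistic, since a white vertex can have $b_t+1$ blue neighbours each with $C[u]\le b_t+1$, but any $O(b_t^2)$ bound suffices), fed through the concave potential $\log_2\log_2(b_t+16)$ via Jensen, a supermartingale, and optional stopping, plus a Markov-type bound on the potential to rule out $b_t$ passing $d^{1/4}$ too early. Your route buys simplicity and robustness: no variance computations, it works directly from $b_0=O(1)$ rather than from $\log d$, and the $\log\log$ potential is precisely what neutralizes the doubly-exponential blow-up of moments that, as you note, defeats naive expectation or union-bound arguments. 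It also sidesteps the paper's somewhat delicate conditioning ``given that $v_{(i+1,2)}$ remains white through the entire process,'' which you replace by the structural observation that the touched cliques form a contiguous arc whose frontier exits have white outer neighbours (with $O(1)$ exceptional cliques when the two frontiers meet). What the paper's route buys in exchange is genuine with-high-probability control of the growth --- useful if one wanted concentration of the propagation time rather than only a bound on its expectation --- and the reuse of computations already carried out in \cite{GH}.
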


To prove this theorem, we begin with the following lemma.

\begin{lemma}\label{complete} Suppose that for some $i$, $v_{(i-1,1)}$ and $v_{(i, 2)}$ are blue and $v_{(i, 1)}$ and $v_{(i + 1, 2)}$ are white. Then, given that $v_{(i + 1, 2)}$ remains white through the entire process, the expected amount of time for vertex $v_{(i, 1)}$ to be blue is $\Omega(\log \log d)$.\end{lemma}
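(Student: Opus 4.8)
The plan is to follow the blue vertices inside the single clique $C_i$ and exploit the fact that their number can at most \emph{square} at each step, so that it takes $\Omega(\log\log d)$ steps before $v_{(i,1)}$ has any appreciable chance of being forced. First I would record the structural observation that since the edge $v_{(i,1)}v_{(i,2)}$ was deleted and we condition on $v_{(i+1,2)}$ staying white, the neighbors of $v_{(i,1)}$ that could ever force it are exactly the interior vertices of $C_i$; conditioning on $v_{(i+1,2)}$ white merely removes the only external forcing route, so it suffices to lower bound the time for the blue set of $C_i$ (which starts as the single vertex $v_{(i,2)}$, the interior being white) to grow large enough to threaten $v_{(i,1)}$.

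Let $X_t$ be the number of blue vertices of $C_i$ after step $t$, so $X_0 = 1$, and let $\mathcal{F}_t$ be the natural filtration. The crux is a one-step growth bound. Every blue vertex $u\in C_i$ satisfies $C[u]\le X_t+1$ and $\deg u = d$, so by $1-(1-p)^k\le kp$ each white vertex of $C_i$ is forced with conditional probability at most $X_t(X_t+1)/d$; summing over the at most $d$ white interior vertices, and adding the single possible contribution from $v_{(i,1)}$, gives $\mathbb{E}[X_{t+1}\mid\mathcal{F}_t]\le X_t + X_t(X_t+1) + 1 = (X_t+1)^2$. Writing $W_t = X_t+2$ this becomes the clean squaring recursion $\mathbb{E}[W_{t+1}\mid\mathcal{F}_t]\le W_t^2$, with $W_0 = 3$.

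To turn the squaring recursion into a usable tail bound I would introduce $V_t = W_t^{2^{-t}}$. Since $x\mapsto x^{2^{-(t+1)}}$ is concave, Jensen's inequality yields $\mathbb{E}[V_{t+1}\mid\mathcal{F}_t]\le\bigp{\mathbb{E}[W_{t+1}\mid\mathcal{F}_t]}^{2^{-(t+1)}}\le (W_t^2)^{2^{-(t+1)}} = V_t$, so $V_t$ is a supermartingale with $\mathbb{E}[V_t]\le V_0 = 3$. Markov's inequality then gives the doubly-exponential control $\Pr[W_t\ge K^{2^t}] = \Pr[V_t\ge K]\le 3/K$ for every $K$. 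Now I would union bound: the ``good'' event that $W_t < K^{2^t}$ for all $t\le T$ fails with probability at most $(T+1)\cdot 3/K$, while the conditional probability that $v_{(i,1)}$ is forced at step $t+1$ is at most $W_t^2/d$, so the chance that $v_{(i,1)}$ is forced within $T$ steps while the good event persists is at most $\sum_{t<T}(K^{2^t})^2/d\le T\,K^{2^T}/d$. Choosing $K=\Theta(T)$ and $T = c\log_2\log_2 d$ for a small enough constant $c$ makes both quantities at most $1/4$ (the second because then $K^{2^T}=d^{o(1)}\ll d$), so $v_{(i,1)}$ remains white for more than $T$ steps with probability at least $1/2$, whence the expected time for it to turn blue is at least $T/2 = \Omega(\log\log d)$.

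The main obstacle is the third step: honestly controlling the genuinely doubly-exponential growth of the blue set. A naive bound on $\mathbb{E}[X_t]$ is useless, since passing to $\log X_t$ to read off the number of elapsed steps runs the wrong way through Jensen's inequality. The device that rescues the argument is the supermartingale $W_t^{2^{-t}}$, whose shrinking exponent is tuned precisely to cancel the squaring in the recursion, so that Jensen becomes an ally and produces a tail bound strong enough to survive the union bound over the first $\Theta(\log\log d)$ steps.
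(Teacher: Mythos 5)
Your proposal is correct, and it reaches the $\Omega(\log\log d)$ bound by a genuinely different mechanism than the paper, even though both rest on the same heuristic that the blue set in $C_i$ can grow at most polynomially per step and hence at most doubly exponentially in time. The paper controls the growth \emph{in probability}, step by step: writing $b$ for the current number of blue vertices in $C_i$, it uses Chebyshev's inequality to show that with probability at least $1-\frac{9}{b^2}$ at most $2b^2$ new vertices are forced and $v_{(i,1)}$ stays white, and then multiplies these per-step guarantees to conclude that passing from $\log d$ to $d^{1/4}$ blue vertices requires $r \ge \frac14\log_3\bigp{\frac{\log d}{\log\log d}}$ rounds, an event of probability $\bigp{1-O\bigp{\frac{1}{(\log d)^2}}}^r = 1-o(1)$. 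You instead control the growth \emph{in expectation}, $\mathbb{E}[W_{t+1}\mid\mathcal{F}_t]\le W_t^2$, and convert that into a tail bound via the normalized supermartingale $V_t = W_t^{2^{-t}}$, Jensen, Markov, and a union bound. Your route buys two things: it requires no variance computation, and it works uniformly from $X_0=1$, whereas the paper's Chebyshev bound $1-\frac{9}{b^2}$ is vacuous for small $b$, which is why its analysis only begins once $b\ge\log d$ and leaves the early regime (during which $v_{(i,1)}$ could in principle already be forced) handled only implicitly. What the paper's route buys in exchange is a $1-o(1)$ guarantee on the growth window rather than your constant probability $\frac12$; for a lower bound on the expectation, the constant is enough. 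One caveat you share with the paper: both proofs treat the conditioning on ``$v_{(i+1,2)}$ remains white through the entire process'' as simply deleting the external forcing route into $v_{(i,1)}$ and then computing unconditioned probabilities inside $C_i$. Strictly speaking one should argue, e.g.\ by a monotone coupling, that this conditioning can only bias the process toward slower growth inside $C_i$ and a later forcing of $v_{(i,1)}$; this is intuitively clear and is equally elided in the paper, so it is not a gap specific to your argument.
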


\begin{proof} Let $p(b)$ be the probability, starting from a state in which $v_{(i + 1, 1)}$ is white and there are $b$ blue vertices in $C_i$, including $v_{(i, 2)}$ but not $v_{(i, 1)}$, that in one time step at most $2b^2$ additional vertices are colored blue and $v_{(i, 1)}$ remains white.

Each white vertex $v\in C_i, v\neq v_{(i,1)}$ has an independent probability of $$\bigp{1 - \frac{b}{d}}^{b - 1}\bigp{1 - \frac{b + 1}{d}} \ge \bigp{1 - \frac{b + 1}{d}}^{b}$$ of remaining white in one unit of time, where the factor $1-\frac{b+1}{d}$ comes from the fact that $v_{(i,2)}$ is connected to an additional blue vertex $v_{(i-1,1)}\in C_{i-1}$. Therefore $$p(b) \ge \bigp{1 - \frac{b}{d}}^{b-1}q(b),$$ where the factor $\bigp{1-\frac{b}{d}}^{b-1}$ comes from the fact that each of the $b-1$ blue vertices $v\in C_i, v\neq v_{i,2}$ has a probability $\frac{b}{d}$ of forcing $v_{i,1}$ blue, and $q(b)$ is the probability that, among $d - b$ independent events that happen with probability at least $1 - \bigp{1 - \frac{b + 1}{d}}^b$, less than $2b^2$ events occur. Denote a random variable $X$ for the number of such events that occur. For $d \ge 2, 1 \le b\le \sqrt[4]{d}$ we have 

\begin{equation*} 
\begin{split} 
\mathbb{E}[X] &= \bigp{d - b}\bigp{1 - \bigp{1 - \frac{b + 1}{d}}^{b}} \\ & \le (d-b)\bigp{1 - \bigp{1 - \frac{b(b + 1)}{d}}} \\ & =  (d-b)\bigp{\frac{b(b + 1)}{d}} \\ & \le b(b+1).
\end{split}
\end{equation*}
Furthermore,
\begin{equation*} 
\begin{split} 
\Var X &= (d - b)\bigp{1 - \bigp{1 - \frac{b + 1}{d}}^{b}}\bigp{1 - \frac{b + 1}{d}}^{b}
\end{split} 
\end{equation*} 
So by Chebyshev's inequality, 
\begin{equation*}
\begin{split}
1 - q(b) & = \Pr[X \ge 2b^2] \\ & \le \Pr[|X - \mathbb{E}[X]| \ge 2b^2 - b(b+1)] \\ & \le \Pr[|X - \mathbb{E}[X]| \ge b^2-b] \\ & \le \frac{\Var X}{(b^2-b)^2} \\ & = \frac{(d - b)\bigp{1 - \bigp{1 - \frac{b + 1}{d}}^{b}}\bigp{1 - \frac{b + 1}{d}}^{b}}{(b^2-b)^2} \\ & \le \frac{(d - b)\bigp{\frac{b(b+1)}{d}}\bigp{1 - \frac{b + 1}{d}}^{b}}{(b^2-b)^2} \\ & \le \frac{b(b+1)}{(b^2-b)^2} \\ & = \frac{b+1}{b(b-1)^2}
\end{split}
\end{equation*}
so
\begin{equation*}
    \begin{split}
        p(b) & \ge \bigp{1-\frac{b}{d}}^{b - 1}\bigp{1 - \frac{b+1}{b(b-1)^2}} \\ & \ge \bigp{1 - \frac{b(b-1)}{d}}\bigp{1 - \frac{b+1}{b(b-1)^2}} \\ & \ge 1 - \frac{b^2}{d} - \frac{2b}{b(b-1)^2} \\ & \ge 1 - \frac{b^2}{b^4} - \frac{8}{b^2} \\ & \ge 1  - \frac{9}{b^2}.
    \end{split}
\end{equation*}
This means that when $b \le \sqrt[4]{d}$, we have $p(b) \ge 1 - \frac{9}{b^2}$, so $1 - p(b) \leq \frac{9}{b^2}$.

We now use a similar argument as in Proposition 2.8 of \cite{GH}. Since starting with $\log{d} \leq b \leq \sqrt[4]{d}$ blue vertices and coloring at most $4b^2$ additional vertices blue means there are at most $5b^2\le b^3$ blue vertices after the round for $b \ge 5$, the probability that there are at most $b^{(3^r)}$ blue vertices after $r$ rounds is at least $\left(1-O(\frac 1 {(\log{d})^2})\right)^r$\!. Thus going from $b\le \log{d}$ blue vertices to $\sqrt[4]{d}$ blue vertices requires that $(\log{d})^{(3^r)} \ge \sqrt[4]{d}$, or $r\ge \frac14 \log_{3}\!\left(\frac {\log{d}} {\log{\log{d}}}\right) = \Omega(\log \log d)$. Thus, starting from a state in which less than $\log{d}$ vertices are blue, $v_{(i, 2)}$ is blue, and vertices $v_{(i, 1)}$ and $v_{(i + 1, 2)}$ are white, the expected amount of time until at least $\sqrt[4]{d}$ vertices are blue or vertex $v_{(i, 1)}$ is blue is $\Omega(\log \log d)$. 

\end{proof}

We now finish the proof of \ref{dregular2}. 

\begin{proof}[Proof of Lemma \ref{dregular2}]

In order for all vertices in $G$ to become blue, the process described in Lemma \ref{complete} must occur at least $\frac{n}{2(d+1)}-1$ times. The expected amount of time for the process to occur once is $\Omega(\log \log d)$. Thus for fixed $d$, we have $\ept(G) = \Omega\bigp{\frac{\log \log d}{d} \cdot n}$.
\end{proof}

The proof of Theorem \ref{dregular} follows by Lemma \ref{dregular1} and Lemma \ref{dregular2}.

\section{Hypercube graphs}

In this section, we prove Theorem \ref{hypercube}. We prove the following key lemma:

\begin{lemma}\label{hypercube1}
Let $G$ be an $n$-dimensional hypercube graph. At a given time in the probabilistic zero forcing process, let $S$ denote the set of blue vertices in $G$, and let $0 \le k < n-1$ be the integer such that $2^k \le |S| < 2^{k+1}$. Then the expected amount of time steps to go from $\abs{S}$ blue vertices to at least $2^{k+1}$ blue vertices is at most $131n \bigp{\frac{1}{n-k-1}}$.
\end{lemma}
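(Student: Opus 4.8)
The plan is to first replace the true process by the ``lazy'' modified process in which every blue vertex forces each white neighbor with probability exactly $\frac1n$, independently across edges and across time steps. Since $\deg u = n$ and $C[u] \ge 1$ for every vertex $u$ of the hypercube, we have $\Pr[u \to v] = \frac{C[u]}{n} \ge \frac1n$, so by the coupling argument of Lemma \ref{okay} the expected time to reach $2^{k+1}$ blue vertices under the modified process is at least that of the true process, and it suffices to bound the former. The two ingredients I would combine are the edge isoperimetric inequality, which guarantees a large blue--white boundary, and a per-step estimate showing that a constant fraction of that boundary is expected to turn blue.

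First I would estimate the expected gain in one step. Let $S$ be the current blue set with $2^k \le |S| < 2^{k+1}$, so $\log_2|S| < k+1$ and the isoperimetric inequality gives at least $|S|\bigp{n-\log_2|S|} > |S|(n-k-1)$ blue--white edges. For a white vertex $v$ with $b(v)$ blue neighbors, the events that distinct white vertices turn blue are independent, since each is determined by a disjoint set of edge-trials, and
$$\Pr[v \text{ turns blue}] = 1 - \bigp{1-\tfrac1n}^{b(v)} \ge 1 - e^{-b(v)/n} \ge \frac{b(v)}{2n},$$
using $1-e^{-y}\ge \frac y2$ for $0\le y \le 1$ together with $b(v)/n \le 1$. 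Summing over the white boundary and using that $\sum_v b(v)$ equals the number of boundary edges, the expected number of newly blue vertices in one step is at least $\frac{|S|(n-k-1)}{2n} \ge \frac{2^k(n-k-1)}{2n}$. Because this gain is a sum of independent indicators, the Chernoff bound gives sharp concentration whenever its mean is bounded below by a constant.

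The final and hardest step is to convert this per-step drift into a bound on the expected time $\tau$ to reach $2^{k+1}$ blue vertices. The main obstacle is overshoot: a single step can color as many as $\Theta(n|S|)$ vertices blue, so the naive additive-drift estimate $\bE[\tau] \le (\text{amount to gain})/(\text{drift})$ loses a factor of $n$ through the last, possibly enormous, jump past the threshold. To avoid this I would split into two regimes at $|S| \asymp \frac{n}{n-k-1}$. In the \emph{large} regime the expected gain exceeds an absolute constant, so by Chernoff each step multiplies $|S|$ by at least $1 + \Omega\bigp{\frac{n-k-1}{n}}$ with probability at least $\frac12$; since only a factor of $2$ must be covered (from at least $2^k$ up to $2^{k+1}$), a negative-binomial bound yields $O\bigp{\frac{n}{n-k-1}}$ steps in expectation. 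In the \emph{small} regime, where $|S| = O\bigp{\frac{n}{n-k-1}}$, I would instead bound the expected waiting time to gain even a single vertex by $\frac{1}{1-e^{-|S|(n-k-1)/(2n)}}$ and sum these over the $O\bigp{\frac{n}{n-k-1}}$ vertices that must still be added, the harmonic-type total again being $O\bigp{\frac{n}{n-k-1}}$. Adding the two contributions and absorbing the constants gives the claimed bound $131 n\bigp{\frac{1}{n-k-1}}$, the generous constant leaving ample room for the various elementary inequalities used along the way.
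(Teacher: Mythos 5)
Your proposal is correct, and its first half coincides with the paper's proof: both use the edge isoperimetric inequality to guarantee at least $|S|(n-k-1)$ blue--white boundary edges, both observe that distinct white vertices turn blue independently (disjoint edge-trials) with probability at least $1-(1-\frac1n)^{b(v)}$, and both conclude that the expected one-step gain is at least a constant times $\frac{2^k(n-k-1)}{n}$. Where you genuinely diverge is in converting this drift into an expected-time bound. The paper works with a fixed gain quantum: it shows that each step colors at least $\frac{\mu}{2}$ new vertices with probability at least $\frac{1}{26}$ (where $\mu$ is the drift lower bound valid throughout the range $[2^k,2^{k+1})$), so $\frac{2^k}{\mu/2}$ successful steps suffice; it then applies a Chernoff bound to a block of $52\cdot\frac{2^k}{\mu/2}$ steps and repeats blocks geometrically. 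This sidesteps your overshoot concern entirely, since one only counts successful steps rather than the total accumulated gain at the stopping time. Your two-regime split (multiplicative growth with a negative-binomial bound when the expected gain is a large constant; geometric single-vertex waits otherwise) handles the same difficulty with different bookkeeping and arguably needs less delicate constant-chasing. Two details you should make explicit when writing it up: (i) Lemma \ref{okay} is stated for the full propagation time, so to apply it to the hitting time of $2^{k+1}$ blue vertices you need the monotone coupling underlying its proof (the lazy process's blue set stays contained in the original's at every step), which is what justifies passing to the $\frac1n$-process; (ii) your claim that the small-regime total is $O(\frac{n}{n-k-1})$ rests on the elementary fact that $2^k(n-k-1)\ge \frac{n}{2}$ for all $0\le k\le n-2$, so the small regime $[2^k,\,O(\frac{n}{n-k-1})]$ spans only a constant multiplicative range and each single-vertex wait there is $O(1)$; without this observation the harmonic sum could a priori contribute an extra logarithmic factor and break the stated $131\frac{n}{n-k-1}$ bound.
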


\begin{proof}
By the Edge Isoperimetric Inequality, there are at least $|S|(n - \log_2{|S|}) \ge 2^k(n - k - 1)$ edges between a white and a blue vertex. A white vertex that is connected to $d$ blue vertices has a probability of at least $1 - (\frac{n - 1}{n})^{d}$ of becoming colored blue, since, for each of these $d$ edges, the probability that propagation does not occur on that edge is at most $1 - \frac{1}{n} = \frac{n-1}{n}$. 

Let $c_i$ be the number of white vertices in $G$ that are connected to exactly $i$ blue vertices. Then note that the expected number of white vertices colored blue is at least the expected number of successes in $\displaystyle \sum_{i=1}^n c_i$ independent trials in which $c_i$ trials have probability $1 - \bigp{\frac{n - 1}{n}}^{i}$ of success. Among positive integers $i\in [n]$, $\frac{\bigp{1 - \bigp{\frac{n - 1}{n}}^{i}}}{i}$ is minimized when $i = n$ and maximized when $i = 1$. Hence the expected number of successes among these $\displaystyle \sum_{i=1}^n c_i$ independent trials, which we denote by $\mu$, satisfies
\begin{align*}
        \mu & =  \sum_{i = 1}^{n} c_i\bigp{1 - \bigp{\frac{n - 1}{n}}^{i}} \\&= \sum_{i = 1}^{n} ic_i \cdot \frac{\bigp{1 - \bigp{\frac{n - 1}{n}}^{i}}}{i} \\ & \le \frac{\bigp{1 - \bigp{\frac{n - 1}{n}}}}{1} \bigp{\sum_{i = 1}^{n} i c_i} \\ & \le \frac{2^{k+1}}{n}.
\end{align*}
and
\begin{align*}
        \mu & = \sum_{i = 1}^{n} c_i\bigp{1 - \bigp{\frac{n - 1}{n}}^{i}} \\ & =  \sum_{i = 1}^{n} ic_i \cdot \frac{\bigp{1 - \bigp{\frac{n - 1}{n}}^{i}}}{i} \\ & \ge \frac{\bigp{1 - \bigp{\frac{n - 1}{n}}^{n}}}{n} \bigp{\sum_{i = 1}^{n} i c_i} \\ & \ge \frac{\bigp{1 - \bigp{\frac{n - 1}{n}}^{n}}}{n}\cdot 2^k\cdot (n-k-1) \\ & \ge \frac{e-1}{e} \cdot 2^k\cdot \bigp{\frac{n-k-1}{n}} \\&\ge \frac{e-1}{2e}.
\end{align*}
for $n\ge 1$ by the Edge Isoperimetric Inequality, noting that $\displaystyle \sum_{i = 1}^{n} ic_i$ is equal to the number of edges between a blue vertex and a white vertex. While there are $2^k \le \abs{S} < 2^{k+1}$ blue vertices, the probability that fewer than $\frac12 \mu$ white vertices are colored blue is at most $e^{-\frac18 \mu} \le e^{-\frac{e-1}{16e}} < \frac{25}{26}$. Thus, the
probability that at least $\frac12 \mu$ white vertices are colored blue is at least $\frac{1}{26}$.

The probability that there remain between $2^k$ and $2^{k+1}$ blue vertices in $52 \frac{2^k }{\mu/2}$ units of time is at most the probability that among $52 \frac{2^k }{\mu/2}$ events that each independently happen with probability $\frac{1}{26}$, fewer than $\frac{2^k }{\mu/2}$ occur. By the Chernoff bound, this probability is at most
\begin{equation*}
    \begin{split}
    p & =  e^{-\frac12 (\frac12)^2 \bigp{2\frac{2^k }{\mu/2}}} \\ & =  e^{-\frac12 \bigp{\frac{2^{k}}{\mu}}} \\ & \le e^{-n} \\ & \le 0.37
    \end{split}
\end{equation*}

Therefore, if $2^k \le \abs{S} < 2^{k+1}$, with probability at least $0.63$, after $52 \frac{2^k}{\mu/2}$ units of time the number of blue vertices will be greater than $2^{k+1}$. We conclude that the expected time to go from $\abs{S}$ blue vertices to at least $2^{k+1}$ blue vertices is at most
\begin{align*}
    \frac{1}{0.63} \cdot 52 \cdot \frac{2^k}{\mu/2} &\le \frac{1}{0.63} \cdot 52 \cdot \frac{e}{e-1} \cdot \frac{n}{n-k-1}\\&\le 131\frac{n}{n-k-1}
\end{align*}

\end{proof}

\begin{lemma}\label{hypercube2}
Let $G$ be an $n$-dimensional hypercube graph. At a given time in the probabilistic zero white vertices in $G$, and let $0 \le k < n-1$ be the integer such that $2^k < |S| \leq 2^{k+1}$. Then the expected amount of time steps to go from $\abs{S}$ white vertices to at most $2^{k}$ white vertices is at most $131n \bigp{\frac{1}{n-k-1}}$.
\end{lemma}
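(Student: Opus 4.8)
The plan is to reprove Lemma \ref{hypercube2} by mirroring the proof of Lemma \ref{hypercube1} line by line, changing only one ingredient: the Edge Isoperimetric Inequality is now applied to the set $S$ of \emph{white} vertices rather than to the blue vertices. The conceptual point that makes this legitimate is that a time step in which the white count decreases is literally the \emph{same} event as a step in which the blue count increases — in both lemmas the only dynamic is a white vertex being forced blue — so the per-vertex forcing probabilities are unchanged and only the accounting of set sizes differs.

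First I would record the isoperimetric bound for the white set. Since $2^k < |S| \le 2^{k+1}$ we have $\log_2 |S| \le k+1$, so the Edge Isoperimetric Inequality gives at least $|S|\bigp{n - \log_2 |S|} \ge 2^k(n-k-1)$ edges between a white and a blue vertex — exactly the lower bound on the number of blue–white edges used in Lemma \ref{hypercube1}. Next, letting $c_i$ denote the number of white vertices adjacent to exactly $i$ blue vertices, each such vertex is forced blue in one step independently with probability at least $1 - \bigp{\frac{n-1}{n}}^i$. Writing $\mu = \sum_{i=1}^n c_i\bigp{1 - \bigp{\frac{n-1}{n}}^i}$ for the expected number of white vertices colored blue in a step, and using that $\frac{1 - (\frac{n-1}{n})^i}{i}$ is minimized at $i=n$ together with the fact that $\sum_i i c_i$ is the number of blue–white edges, the same computation as in Lemma \ref{hypercube1} yields
$$\mu \ge \frac{1 - \bigp{\frac{n-1}{n}}^n}{n}\sum_{i=1}^n i c_i \ge \frac{e-1}{e}\cdot 2^k\cdot\frac{n-k-1}{n} \ge \frac{e-1}{2e}.$$

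Finally, reaching at most $2^k$ white vertices from the current count $|S| \le 2^{k+1}$ requires coloring at most $|S| - 2^k \le 2^k$ white vertices blue — the identical quantity controlled in Lemma \ref{hypercube1}. I would then invoke the same concentration argument: in each step at least $\frac12\mu$ white vertices turn blue with probability at least $\frac{1}{26}$ (by the Chernoff bound, since $e^{-\frac18\mu} \le e^{-\frac{e-1}{16e}} < \frac{25}{26}$), so at most $\frac{2^k}{\mu/2}$ successful steps are needed, and the expected-time-to-success estimate produces the bound $131n\bigp{\frac{1}{n-k-1}}$.

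I expect essentially no new obstacle: the whole difficulty is confined to verifying that the isoperimetric inequality, applied to the minority white set, reproduces the $2^k(n-k-1)$ edge bound, after which every estimate transfers verbatim. The one point worth stating explicitly is \emph{why} the white version is needed at all: when $|S|$ is small the blue set is almost all of $G$, so applying isoperimetry to the blue set would give only a weak boundary bound, whereas applying it to the small white set keeps the boundary — and hence the forcing rate — large. This is precisely what allows the final phases, where only a few white vertices remain, to be cleared quickly, so that the per-phase bounds sum to the desired $O(n\log n)$.
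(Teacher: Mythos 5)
Your proposal is correct and follows essentially the same route as the paper's own proof: the paper likewise reproves Lemma \ref{hypercube1} verbatim with the Edge Isoperimetric Inequality applied to the (small) white set $S$, noting $2^k < |S| \le 2^{k+1}$ still yields at least $2^k(n-k-1)$ blue--white boundary edges, and then reuses the same $\mu$ bounds and Chernoff/expected-time argument to obtain $131n\bigp{\frac{1}{n-k-1}}$. The only cosmetic difference is that the paper packages the final step as blocks of $52\frac{2^k}{\mu/2}$ time steps each succeeding with probability at least $0.63$ (which is where its upper bound $\mu \le \frac{2^{k+1}}{n}$ gets used), whereas you appeal directly to an expected-time-to-success count; both yield the stated constant.
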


\begin{proof}

The proof proceeds similarly to the proof of Lemma \ref{hypercube1}. By the Edge Isoperimetric Inequality, there are at least $|S|(n - \log_2{|S|}) \ge 2^k(n - k - 1)$ edges between a white and a blue vertex.

Let $c_i$ be the number of white vertices in $G$ that are connected to exactly $i$ blue vertices. Then note that the expected number of white vertices colored blue is at least the expected number of successes in $\displaystyle \sum_{i = 1}^{n} c_i$ independent trials in which $c_i$ trials have probability $1 - (\frac{n - 1}{n})^{i}$ of success. The expected number of successes among these $\displaystyle \sum_{i = 1}^{n} c_i$ independent trials, which we denote by $\mu$, satisfies
\begin{equation*}
    \begin{split}
        \frac{e-1}{2e} \le \mu \le \frac{2^{k+1}}{n}
    \end{split}
\end{equation*}
for $n\ge 1$. While there are $2^k < \abs{S} \le 2^{k+1}$ white vertices, the probability that fewer than $\frac12 \mu$ white vertices are colored blue is at most $e^{-\frac18 \mu} < \frac{25}{26}$. Thus, the
probability that at least $\frac12 \mu$ white vertices are colored blue is at least $\frac{1}{26}$. The probability that there remain between $2^k$ and $2^{k+1}$ blue vertices in $52 \frac{2^k }{\mu/2}$ units of time is at most the probability that among $52 \frac{2^k }{\mu/2}$ events that each independently happen with probability $\frac{1}{26}$, fewer than $\frac{2^k }{\mu/2}$ occur. This probability is at most $0.37$ as in Lemma \ref{hypercube1}.

Therefore, if $2^k < \abs{S} \leq 2^{k+1}$, with probability at least $0.63$, after $52 \frac{2^k}{\mu/2}$ units of time the number of white vertices will be at most $2^{k}$. We conclude that the expected time to go from $\abs{S}$ white vertices to at most  $2^{k}$ white vertices is at most $$\frac{1}{0.63} \cdot 52 \cdot \frac{2^k}{\mu/2} \le 131\frac{n}{n-k-1}.$$

\end{proof}

We are finally ready to prove Theorem \ref{hypercube}.

\begin{proof}[Proof of Theorem \ref{hypercube}]
By Linearity of Expectation, the expected amount of time, starting from $1$ initial blue vertex, for there to be at least $2^{n - 1}$ blue vertices is at most the sum over $i=0,1,\ldots,n-2$ of the expected amount of times to go from $2^k \le \abs{S} < 2^{k+1}$ blue vertices to at least $2^{k+1}$ blue vertices. By Lemma \ref{hypercube1}, this sum is at most
\begin{equation*}
\sum_{k = 0}^{n - 2} 131n \cdot \frac{1}{n-k-1} \\ \le 131n \cdot (1 + \log n).
\end{equation*}
Once there at least $2^{n-1}$ blue vertices, there are at most $2^{n-1}$ white vertices. The expected amount of time to go from at most $2^{n - 1}$ white vertices to at most one white vertex is, by Lemma \ref{hypercube2}, at most 
\begin{equation*}
\sum_{k = 0}^{n - 2} 131n \cdot \frac{1}{n-k-1} \\ \le 131n \cdot (1 + \log n).
\end{equation*}
When there is at most one white vertex remaining, this vertex has a probability of $$1 - \bigp{\frac{n - 1}{n}}^{n} \ge 1 - \frac{1}{e}$$ of being colored blue at any time step, so the expected amount of time steps until it is blue is $\frac{e}{e-1}$. We conclude that the expected propagation time of the entire hypercube is $O(n \log n)$.
\end{proof}

\begin{remark}
Experimentally, for this family of graphs, the expected propagation time appears to approximate $n + 0.8$ for small $n$.
\end{remark}

For each $1 \le n \le 16$, we run a program to simulate the probabilistic zero-forcing process $1000$ times on a hypercube graph with dimension $n$ and $2^n$ vertices, starting from a single blue vertex. Figure \ref{figure-2} shows the average propagation time over $1000$ trials for each value of $n$.

\begin{figure}[h]
    \centering

\begin{tabular}{ |c|c| } 
 \hline
 $n$ & $\ept$ \\ \hline
 1 & 1.00 \\
 \hline
 2 & 2.32 \\
  \hline
 3 & 3.51 \\
  \hline
 4 & 4.68 \\
  \hline
 5 & 5.78 \\
  \hline
 6 & 6.79 \\
  \hline
 7 & 7.78 \\
  \hline
 8 & 8.79 \\
  \hline
 9 & 9.78 \\
  \hline
 10 & 10.81 \\
  \hline
 11 & 11.78 \\
  \hline
 12 & 12.82 \\
  \hline
 13 & 13.88 \\
  \hline
 14 & 14.80 \\
  \hline
 15 & 15.80 \\
  \hline
 16 & 16.79 \\
  \hline
\end{tabular}
    \caption{Simulated expected propagation times of hypercube graphs of dimension $n$, $1 \le n \le 16$.}
    \label{figure-2}
\end{figure}

\end{document}